\newtheorem{theorem}{Theorem}[section]
\newtheorem{remark}[theorem]{Remark}
\newtheorem{definition}[theorem]{Definition}
\newtheorem{lemma}[theorem]{Lemma}
\newtheorem{corollary}[theorem]{Corollary}
\newtheorem{conjecture}[theorem]{Conjecture}
\begin{document}
\textwidth 150mm \textheight 225mm
\title{ The generalized distance matrix of digraphs
\thanks{Supported by the National Natural Science Foundation of China (No. 11871398), the Natural Science Basic Research Plan in Shaanxi Province of China (Program No. 2018JM1032) and China Scholarship Council (No. 201706290182).
 }}
\author{{Weige Xi$^{1,2}$, Wasin So$^3$, Ligong Wang$^{1,2,}\footnote{Corresponding author.}$ }\\
{\small $^1$ Department of Applied Mathematics, School of Science,}\\ {\small  Northwestern Polytechnical University, Xi¡¯an, Shaanxi 710072, China.}\\
{\small $^2$ Xi'an-Budapest Joint Research Center for Combinatorics,}\\ {\small Northwestern Polytechnical University, Xi'an, Shaanxi 710129, China.}\\
{\small $^3$ Department of Mathematics and Statistics, San Jose State University, }\\
{\small San Jose, CA 95192-0103, USA.}\\
{\small E-mail:  xiyanxwg@163.com, wasin.so@sjsu.edu, lgwangmath@163.com }\\}

\date{}
\maketitle
\begin{center}
\begin{minipage}{120mm}
\vskip 0.3cm
\begin{center}
{\small {\bf Abstract}}
\end{center}
{\small  \ Let $D(G)$ and $D^Q(G)= Diag(Tr) + D(G)$ be the distance matrix
and distance signless Laplacian matrix of a simple strongly connected digraph $G$, respectively,
where $Diag(Tr)=\textrm{diag}(D_1,D_2,$ $\ldots,D_n)$
be the diagonal matrix with vertex transmissions of the digraph $G$. To track the gradual change
of $D(G)$ into $D^Q(G)$, in this paper, we propose to study the convex combinations of $D(G)$ and $Diag(Tr)$ defined by
$$D_\alpha(G)=\alpha Diag(Tr)+(1-\alpha)D(G), \ \ 0\leq \alpha\leq1.$$
This study reduces to merging the distance spectral and distance signless
Laplacian spectral theories. The eigenvalue with the largest modulus
of $D_\alpha(G)$ is called the $D_\alpha$ spectral radius of $G$, denoted by
$\mu_\alpha(G)$. We determine the digraph which attains the maximum (or minimum)
$D_\alpha$ spectral radius among all strongly connected digraphs. Moreover, we also determine the digraphs which attain the minimum
$D_\alpha$ spectral radius among all strongly connected digraphs with given parameters such as dichromatic number, vertex connectivity or arc connectivity.

\vskip 0.1in \noindent {\bf Key Words}: \ Strongly connected digraph, $D_\alpha$ spectral radius, Dichromatic number, Vertex connectivity, Arc connectivity.
 \vskip
0.1in \noindent {\bf AMS Subject Classification (2000)}: \ 05C50,15A18}
\end{minipage}
\end{center}

\section{Introduction }

A digraph is simple if it has no loops and multiple arcs. A digraph is strongly
connected if for every pair of vertices $v_i, v_j\in V(G)$, there
exists a directed path from $v_i$ to $v_j$. Throughout this paper, we only
consider simple strongly connected digraphs. We
use standard terminology and notation, and refer the reader to \cite{BG} for an extensive treatment of digraphs.

Let $G=(V(G), E(G))$ be a simple strongly connected digraph, if there is an arc from $v_i$ to $v_j$, we indicate this by writing
$(v_i,v_j)$, call $v_j$ (resp. $v_i$) the head (resp. the tail)
of $(v_i,v_j)$, the vertex $v_i$ is a tail of $v_j$, and $(v_i,v_j)$ is said to be out-incident to $v_i$ and in-incident to $v_j$.
For any vertex $v_i$,
let $N_i^{+}=N^+_{v_i}(G)=\{v_j\in V(G) \mid (v_i,v_j)\in
E(G)\}$ and $N_i^{-}=N^-_{v_i}(G)=\{v_j\in V(G)
\mid (v_j,v_i)\in E(G)\}$ denote the out-neighbors
and in-neighbors of $v_i$, respectively. Let $d_i^+=d_{v_i}^+(G)=|N_i^{+}|$
denote the outdegree of the vertex $v_i$, and $d_i^-=d_{v_i}^-(G)=|N_i^{-}|$
denote the indegree of the vertex $v_i$ in $G$. Let $C_{n}$ denote the directed cycle on $n$ vertices.
Let $\overset{\longleftrightarrow}{K_{n}}$
denote the complete digraph on $n$ vertices in which two arbitrary distinct vertices
$v_i,v_j\in V(\overset{\longleftrightarrow}{K_{n}})$, there are arcs
$(v_i,v_j)\in E(\overset{\longleftrightarrow}{K_{n}})$ and $(v_j,v_i)\in E(\overset{\longleftrightarrow}{K_{n}})$.
A tournament is a directed graph obtained by assigning a direction for each edge
in an undirected complete graph. A digraph is a transitive tournament if it is
tournament and the set of its outdegrees is $\{0,1,2,\ldots,n-1\}$.

Let $G$ be a digraph. If $S\subset V(G)$, then we use $G[S]$ to denote the subdigraph of $G$ induced by $S$. We use $G+e$ to denote
the digraph obtained from $G$ by adding the arc $e\notin E(G)$. Let $G_1$ and $G_2$ be two disjoint digraphs, $G_1\cup G_2$ is a digraph with vertex set $V(G_1)\cup V(G_2)$ and arc set $E(G_1)\cup E(G_2)$.
We denote by $G_1\vee G_2$ the join of $G_1$ and $G_2$, which is the digraph such that $V(G_1\vee G_2) =V(G_1)\cup V(G_2)$
and $E(G_1\vee G_2) =E(G_1)\cup E(G_2)\cup\{(u,v),(v,u): u\in V(G_1) \ \textrm{and} \ v\in V(G_2)\}$.

A digraph is acyclic if it has no directed cycle.
For a digraph $G$, a vertex set $F\subset V(G)$ is acyclic if its induced subdigraph $G[F]$
is acyclic. A partition of $V(G)$ into $k$ acyclic sets is called a
$k$-coloring of $G$. The minimum integer $k$ for which there exists
a $k$-coloring of $G$ is the dichromatic number $\chi(G)$ of the digraph $G$. For a strongly connected digraph $G=(V(G),E(G))$,
the vertex connectivity of $G$, denoted by $\kappa(G)$, is the minimum
number of vertices whose removal destroys the strongly connectivity of $G$. A
set of arcs $S\subset E(G)$ is an arc cut set if $G-S$ is not strongly connected.
The arc connectivity of $G$, denoted by $\kappa'(G)$, is the minimum
number of arcs whose deletion yields the resulting digraph non-strongly connected.

For a simple strongly connected digraph $G$ of order $n$, let $D(G)=(d_{ij})$ be the distance matrix of $G$, where $d_{ij}=d_G(v_i, v_j )$
be the length of shortest directed path from
$v_i$ to $v_j$ in $G$. We call $D_i=D_{v_i}(G) =\sum\limits^n_{j=1}d_{ij}$
the transmission of the vertex $v_i$ of $G$. We call a digraph $G$ $r$-distance regular if
$D_1 = D_2 =\ldots= D_n=r$. Let $Diag(Tr)=\textrm{diag}(D_1, D_2,\ldots, D_n)$ be
the diagonal matrix with vertex transmissions of $G$. Then $D^Q(G)= Diag(Tr) + D(G)$ is called the distance signless Laplacian matrix of $G$.
The spectral radius of $D(G)$ is called the distance spectral radius of $G$, and the spectral radius of $D^Q(G)$ is called
the distance signless Laplacian spectral radius of $G$.

So far, the distance spectrum and distance signless Laplacian spectrum of connected undirected graphs have been investigated extensively,
see \cite{AP,AP1,LZ,XZ,XZL} as well as the references therein. Recently, the distance spectral radius and distance
signless Laplacian spectral radius of digraphs have also been studied in some papers. For example, Lin et al. \cite{LS} characterized the extremal digraphs with minimum distance spectral radius among all digraphs with given vertex connectivity.
Lin and Shu \cite{LiS} characterized the digraphs having the maximal and minimal distance spectral radii among all strongly connected digraphs,
and they also determined the extremal digraphs which have the minimal distance spectral radius among all strongly connected digraphs
with given arc connectivity and dichromatic number, respectively. Li et al. \cite{LWM}
determined the extremal digraph which has the minimum distance
signless Laplacian spectral radius among all strongly connected digraphs
with given dichromatic number. Li et al. \cite{LWM1}, Xi and Wang \cite{XiWa2} independently
determined the extremal digraph with the minimum distance signless Laplacian spectral radius among
all strongly connected digraphs with given vertex connectivity.

Nikiforov \cite{Niki} proposed to study the convex linear combinations of the adjacency matrix and diagonal degree matrix of undirected graphs,
which can underpin a unified theory of adjacency spectral and signless Laplacian spectral theories. Similarly, Cui et al. \cite{CHT}
proposed to study the convex combinations of the distance matrix and
the diagonal matrix with vertex transmissions of undirected graphs, which reduces to merging the distance spectral and distance signless
Laplacian spectral theories. Here, we extend the definition to digraphs. We consider to study the convex combinations $D_\alpha(G)$ of $Diag(Tr)$ and $D(G)$
defined by
$$D_\alpha(G)=\alpha Diag(Tr)+(1-\alpha)D(G), \ \ 0\leq \alpha\leq1.$$
Many facts suggest that the study of the family $D_\alpha(G)$ is long due. To begin with,
obviously,
$$D(G)=D_0(G), \ \ \  Diag(Tr)(G)=D_1(G), \ \ \ \textrm{and} \ \ \ D^Q(G)=2D_{\frac{1}{2}}(G).$$
Since $D_{\frac{1}{2}}(G)$ is essentially equivalent to $D^Q(G)$, in this paper
we take $D_{\frac{1}{2}}(G)$ as
an exact substitute for $D^Q(G)$. With this caveat, one can see that $D_\alpha(G)$ seamlessly
joins $D(G)$ to $Diag(Tr)$ with $D^Q(G)$ being right in the middle of the range. The spectral radius of
$D_\alpha(G)$, i.e., the eigenvalues with largest modulus of $D_\alpha(G)$, is called the $D_\alpha$ spectral radius of $G$, denoted by
$\mu_\alpha(G)$. The main goal of this paper is to extend some results
on maximum or minimum $D_0$ spectral radius and $D_{\frac{1}{2}}$ spectral radius for all $\alpha\in[0,1)$.

If $\alpha=1$, $D_1(G)=Diag(Tr)$ the diagonal matrix with vertex transmissions of $G$ which is not interesting.
So we only consider the cases $0\leq \alpha<1$ for the rest of this paper. If $G$ is a
strongly connected digraph, then it follows from the Perron Frobenius Theorem \cite{HJ} that $\mu_\alpha(G)$ is an eigenvalue of $D_\alpha(G)$,
and there is a unique positive unit eigenvector corresponding to $\mu_\alpha(G)$. The positive unit eigenvector
corresponding to $\mu_\alpha(G)$ is called the Perron vector of $D_\alpha(G)$.

The rest of the paper is structured as follows. In the next section we introduce
some lemmas and give basic facts about the $D_\alpha$ spectral radius of a digraph $G$. In Section 3,
we characterize the unique digraph which has the minimum $D_\alpha$ spectral radius among all strongly connected
digraphs with given dichromatic number. In Section 4, we characterize the extremal digraphs
which attain the minimum $D_\alpha$ spectral radius among all strongly connected digraphs with given
vertex connectivity. In Section 5, we characterize the extremal digraphs with the minimum $D_\alpha$ spectral radius among all strongly connected digraphs with given
arc connectivity.

\section{Preliminaries and basic properties of $\mu_\alpha(G)$}

In the rest of this section, let $\rho(\cdot)$ denote the spectral radius of a square matrix.

\noindent\begin{lemma}\label{le:1} (\cite{HJ}) \ Let $M=(m_{ij})$
be an $n \times n$ nonnegative irreducible matrix, and $R_i(M)$ be the $i$-th row sum of $M$. Then
$$\min\{R_{i}(M):1 \leq i\leq n\}\leq \rho(M)\leq \max\{R_{i}(M):1
\leq i\leq n\}.$$
 Moreover, either one equality holds  if
and only if $R_1(M)=R_2(M)=\ldots=R_n(M)$.
\end{lemma}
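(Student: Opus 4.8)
The plan is to deduce both inequalities from the Perron--Frobenius theorem cited as \cite{HJ}. Since $M$ is nonnegative and irreducible, Perron--Frobenius guarantees that $\rho(M)$ is an eigenvalue of $M$ with an associated positive eigenvector $x=(x_1,\ldots,x_n)^{\top}>0$, so that $Mx=\rho(M)x$, which reads coordinatewise as $\rho(M)x_i=\sum_{j=1}^n m_{ij}x_j$ for each $i$. The whole argument then amounts to testing this identity at the extreme coordinates of $x$.

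First I would extract the two bounds. Let $x_k=\max_i x_i$ and $x_\ell=\min_i x_i$. Bounding $x_j\le x_k$ inside the $k$-th equation gives $\rho(M)x_k=\sum_j m_{kj}x_j\le\big(\sum_j m_{kj}\big)x_k=R_k(M)\,x_k\le\max_i R_i(M)\cdot x_k$, and dividing by $x_k>0$ yields $\rho(M)\le\max_i R_i(M)$. Running the symmetric estimate in the $\ell$-th equation with $x_j\ge x_\ell$ produces $\rho(M)\ge\min_i R_i(M)$, which establishes the two-sided inequality without yet using irreducibility.

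The equality case is where irreducibility genuinely enters, and I expect the propagation step to be the main obstacle. Suppose $\rho(M)=\max_i R_i(M)$. Then every inequality in the chain above must be tight; in particular $\sum_j m_{kj}(x_k-x_j)=0$. Each summand is a product of $m_{kj}\ge 0$ and $x_k-x_j\ge 0$, so every term vanishes, forcing $x_j=x_k$ whenever $m_{kj}>0$. The key point is then to spread this equality to all coordinates: by irreducibility, for each index $j$ there is a directed walk $k=i_0\to i_1\to\cdots\to i_s=j$ along strictly positive entries of $M$, and inducting along such a walk gives $x_j=x_k$ for every $j$. Hence $x$ is a constant vector, and $Mx=\rho(M)x$ collapses to $R_i(M)=\rho(M)$ for all $i$, i.e. all row sums coincide; the assumption $\rho(M)=\min_i R_i(M)$ is handled in the same way starting from $x_\ell$. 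For the converse, if all row sums equal a common value $r$, then $M\mathbf{1}=r\mathbf{1}$ exhibits $r$ as an eigenvalue with a positive eigenvector, so the uniqueness clause of Perron--Frobenius identifies $r$ with $\rho(M)$, and both bounds become equalities simultaneously. This closes the ``if and only if''.
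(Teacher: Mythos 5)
The paper does not prove this lemma at all: it is quoted as a known result from Horn and Johnson \cite{HJ}, so there is no in-paper argument to compare yours against. Judged on its own, your proof is correct and is essentially the standard textbook argument: invoke Perron--Frobenius for a positive eigenvector, test the eigenvalue equation at the maximal and minimal coordinates to obtain the two bounds, and in the equality case propagate the coordinate identity along directed walks (irreducibility) to conclude the eigenvector is constant, whence every row sum equals $\rho(M)$. Two small points deserve attention, though neither is a genuine gap. First, your aside that the two-sided inequality is established ``without yet using irreducibility'' is misleading: the positivity of the eigenvector, which you need to divide by $x_k$ and, more critically, by $x_\ell$ in the lower bound, is exactly what irreducibility supplies; for a merely nonnegative matrix one only gets a nonnegative eigenvector, and if $x_\ell=0$ the lower-bound step degenerates to $0\ge 0$. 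Second, the induction along the walk needs one explicit observation to go through: having concluded $x_{i_1}=x_k$ for every $i_1$ with $m_{k i_1}>0$, you must note that such an $i_1$ is then itself a maximizing coordinate, so the tightness chain $\rho(M)x_{i_1}=\sum_j m_{i_1 j}x_j\le R_{i_1}(M)\,x_{i_1}\le \max_i R_i(M)\, x_{i_1}=\rho(M)x_{i_1}$ can be re-applied at $i_1$ to force $x_{i_2}=x_{i_1}$ for $m_{i_1 i_2}>0$, and so on; this re-application is what licenses the induction. Your converse direction is fine, and in fact can be shortened: equal row sums make $\min_i R_i(M)=\max_i R_i(M)$, so both equalities follow from the already-proved two-sided bound without appealing to the uniqueness clause of Perron--Frobenius.
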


Applying Lemma \ref{le:1} to digraphs, we have the following result.

\noindent\begin{theorem}\label{th:1} \ Let $G$ be a strongly connected digraph with $V(G)=\{v_1, v_2, \ldots, v_n\}$. Then
$$\min\{\alpha D_i+\frac{1-\alpha}{D_i}T_i : 1
\leq i\leq n\}\leq \mu_\alpha(G)\leq \max\{\alpha D_i+\frac{1-\alpha}{D_i}T_i : 1
\leq i\leq n\},$$
where $T_i=\sum\limits_{t=1}^n d_{it}D_t$. Moreover, if $\frac{1}{2}<\alpha<1$, then either one equality holds if
and only if $G$ is distance regular.
\end{theorem}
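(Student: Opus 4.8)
The plan is to reduce the statement to Lemma~\ref{le:1} via a diagonal similarity transformation. Since $G$ is strongly connected, every off-diagonal entry $d_{ij}$ of $D(G)$ is a positive integer, so for $0\le\alpha<1$ the matrix $D_\alpha(G)$ is nonnegative and irreducible, and by Perron--Frobenius $\mu_\alpha(G)=\rho(D_\alpha(G))$. The entries of $D_\alpha(G)$ are $(D_\alpha)_{ii}=\alpha D_i$ and $(D_\alpha)_{ij}=(1-\alpha)d_{ij}$ for $i\ne j$, so its $i$-th row sum is exactly $D_i$, which only recovers $\min_i D_i\le\mu_\alpha\le\max_i D_i$. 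To produce the weighted sums $T_i$, first I would conjugate by $U=\mathrm{diag}(D_1,\dots,D_n)$, whose diagonal entries are positive, and set $M=U^{-1}D_\alpha(G)U$. Then $M$ is again nonnegative and irreducible with $\rho(M)=\mu_\alpha(G)$, its diagonal entries are unchanged ($\alpha D_i$) while $M_{ij}=(1-\alpha)\frac{D_j}{D_i}d_{ij}$ for $i\neq j$; hence $R_i(M)=\alpha D_i+\frac{1-\alpha}{D_i}\sum_{j}d_{ij}D_j=\alpha D_i+\frac{1-\alpha}{D_i}T_i$. Feeding these row sums into Lemma~\ref{le:1} gives the two-sided bound at once, and Lemma~\ref{le:1} also tells us that either equality holds if and only if all the $R_i(M)$ coincide.

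For the equality characterization I would first dispatch the easy direction: if $G$ is distance regular with $D_i=r$ for all $i$, then $T_i=r\sum_j d_{ij}=r^2$ and each $R_i(M)=\alpha r+(1-\alpha)r=r$, forcing equality. The substance is the converse under the hypothesis $\tfrac12<\alpha<1$. Assuming all row sums equal the common value $c=\mu_\alpha(G)$, the relation $\alpha D_i+\frac{1-\alpha}{D_i}T_i=c$ rearranges (using $\sum_j d_{ij}=D_i$) to the identity
\[
D_i(D_i-c)=(1-\alpha)\sum_{j}d_{ij}(D_i-D_j),\qquad 1\le i\le n.
\]
I would then evaluate this at a vertex $v_p$ of maximum transmission $D_p=D_{\max}$ and a vertex $v_q$ of minimum transmission $D_q=D_{\min}$, bounding each summand by $|D_i-D_j|\le D_{\max}-D_{\min}$ together with $\sum_j d_{pj}=D_p$ and $\sum_j d_{qj}=D_q$. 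This yields $c\ge \alpha D_{\max}+(1-\alpha)D_{\min}$ from $v_p$ and $c\le \alpha D_{\min}+(1-\alpha)D_{\max}$ from $v_q$. Combining the two inequalities gives $(2\alpha-1)(D_{\max}-D_{\min})\le0$, and since $2\alpha-1>0$ we conclude $D_{\max}=D_{\min}$, i.e. $G$ is distance regular.

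I expect the main obstacle to be this converse direction, and specifically the choice of estimates at the extremal vertices: the naive bound $T_p\le D_p^2$ only yields $c\le D_{\max}$ and never involves $\alpha$, so it cannot detect the threshold $\alpha=\tfrac12$. The key point is that one must bound the weighted differences $\sum_j d_{ij}(D_i-D_j)$ by $D_{\max}-D_{\min}$ at both the maximum and the minimum transmission vertex simultaneously; only the resulting pair of opposing inequalities isolates the factor $2\alpha-1$, which is precisely where the hypothesis $\alpha>\tfrac12$ enters (for $\alpha\le\tfrac12$ the sandwich becomes vacuous, consistent with the theorem's restriction). A minor point to verify is that strong connectivity guarantees $D_i>0$ for every $i$, so that $U$ is invertible and the rearrangement above is legitimate.
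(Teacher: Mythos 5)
Your proposal is correct and follows essentially the same route as the paper: conjugating $D_\alpha(G)$ by $\mathrm{diag}(D_1,\dots,D_n)$, applying Lemma~\ref{le:1} to the resulting row sums, and, for the equality case with $\tfrac12<\alpha<1$, comparing the row-sum condition at a maximum- and a minimum-transmission vertex to force $(2\alpha-1)(Tr_{\max}-Tr_{\min})\le 0$. Your rearranged identity $D_i(D_i-c)=(1-\alpha)\sum_j d_{ij}(D_i-D_j)$ is just a repackaging of the paper's bounds $T_1\ge Tr_{\max}Tr_{\min}$ and $T_n\le Tr_{\max}Tr_{\min}$, so the two arguments are the same in substance.
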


\begin{proof} Let $Diag(Tr)=\textrm{diag}(D_1, D_2,\ldots, D_n)$ be
the diagonal matrix with vertex transmissions of $G$. By a simple calculation, we get the $i$-th row sum of $Diag(Tr)^{-1}D_\alpha(G)Diag(Tr)$ is
$$R_i(Diag(Tr)^{-1}D_\alpha(G)Diag(Tr))=\alpha D_i+\frac{1-\alpha}{D_i}\sum\limits_{t=1}^n d_{it}D_t=\alpha D_i+\frac{1-\alpha}{D_i}T_i.$$
Take $M=Diag(Tr)^{-1}D_\alpha(G)Diag(Tr)$, by Lemma \ref{le:1}, the required result follows.

For $\frac{1}{2}<\alpha<1$, suppose that either of the equalities holds. Then Lemma \ref{le:1} implies that the row sums of $Diag(Tr)^{-1}D_\alpha(G)Diag(Tr)$ are all equal.
That is, for any vertices $v_i,v_j\in V(G)$,
$$\alpha D_i+\frac{1-\alpha}{D_i}T_i=\alpha D_j+\frac{1-\alpha}{D_j}T_j.$$

We use $Tr_{\max}$ and $Tr_{\min}$ denote the maximum and minimum vertex transmission of $G$, respectively.
Without loss of generality, assume that $D_1=Tr_{\max}$ and $D_n=Tr_{\min}$. It is easy to know that
$T_1=\sum\limits_{t=1}^n d_{1t}D_t\geq D_1D_n=Tr_{\max} Tr_{\min}$ and $T_n=\sum\limits_{t=1}^n d_{nt}D_t\leq D_1D_n=Tr_{\max} Tr_{\min}$. Thus, we obtain
$$\alpha Tr_{\max}+(1-\alpha)Tr_{\min}\leq\alpha Tr_{\max}+\frac{1-\alpha}{Tr_{\max}}T_1=\alpha Tr_{\min}+\frac{1-\alpha}{Tr_{\min}}T_n\leq\alpha Tr_{\min}+(1-\alpha)Tr_{\max}.$$
which implies that $Tr_{\max}=Tr_{\min}$ for $\frac{1}{2}<\alpha<1$. Therefore, $G$ is distance regular.

Conversely, if $G$ is a $r$-distance regular digraph, then $\mu_\alpha(G) =r$. On the other hand, by a simple calculation, we get
$$\alpha D_i+\frac{1-\alpha}{D_i}T_i=r$$
for any $v_i\in V(G)$. Therefore, both of the equalities hold.
\end{proof}

\begin{definition}\label{de:1} (\cite{BP,HJ}) Let $A = (a_{ij})$ and $B = (b_{ij})$ be $n\times m$ matrices.
If $a_{ij} \leq b_{ij}$ for all $i$ and $j$, then $A \leq B$. If $ A \leq B$ and $A\neq B$, then $A < B$.
If $a_{ij}< b_{ij}$ for all $i$ and $j$, then $A\ll B$.
\end{definition}

\begin{lemma}\label{le:2} (\cite{BP,HJ}) Let $A$ and $B$ be nonnegative matrices. If $0 \leq A \leq B$,
then $\rho(A) \leq \rho(B)$. Furthermore, if $B$ is irreducible and $0\leq A< B$, then $\rho(A)<\rho(B)$.
\end{lemma}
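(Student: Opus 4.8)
The plan is to establish the two claims in turn. For the weak inequality I would exploit that the entrywise order is preserved under products of nonnegative matrices: since $0\le A\le B$, an easy induction gives $0\le A^k\le B^k$ for every $k\ge 1$, because $(A^{k+1})_{ij}=\sum_\ell A_{i\ell}(A^k)_{\ell j}\le \sum_\ell B_{i\ell}(B^k)_{\ell j}=(B^{k+1})_{ij}$ with all factors nonnegative. Taking any norm that is monotone on nonnegative matrices, say the maximum absolute row-sum norm $\|\cdot\|_\infty$, yields $\|A^k\|_\infty\le\|B^k\|_\infty$, and Gelfand's formula $\rho(M)=\lim_{k\to\infty}\|M^k\|_\infty^{1/k}$ then gives $\rho(A)\le\rho(B)$ at once.

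For the strict inequality I would argue by contradiction using the Perron--Frobenius machinery available for irreducible $B$. Because $B$ is nonnegative and irreducible, $\rho(B)$ admits a strictly positive left eigenvector $x>0$ with $x^\top B=\rho(B)x^\top$; because $A$ is nonnegative, $\rho(A)$ is an eigenvalue with some nonnegative nonzero right eigenvector $y\ge 0$, $Ay=\rho(A)y$. Suppose $\rho(A)=\rho(B)=\rho$. Then the sandwich
$$\rho\, x^\top y=x^\top(Ay)\le x^\top(By)=(x^\top B)y=\rho\, x^\top y$$
(valid since $x>0$, $y\ge 0$ and $A\le B$) must be an equality, so $x^\top(B-A)y=0$. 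As $x_i>0$ and every $(B-A)_{ij}y_j\ge 0$, each term must vanish, giving $(B-A)_{ij}\,y_j=0$ for all $i,j$.

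The crux is then a support argument. Set $S=\{j:y_j=0\}$; it is nonempty because $A\ne B$ forces $(B-A)_{pq}>0$ for some $p,q$, whence $y_q=0$, and it is proper because $y\ne 0$. For $j\notin S$ we have $y_j>0$, so $(B-A)_{ij}=0$, i.e.\ $B_{ij}=A_{ij}$ for all $i$. For $i\in S$ the relation $(Ay)_i=\rho y_i=0$, combined with $y_j=0$ on $S$, forces $\sum_{j\notin S}A_{ij}y_j=0$, hence $A_{ij}=0$ and therefore $B_{ij}=0$ for every $i\in S$, $j\notin S$. Thus $B$ has no nonzero entry from $S$ to its complement, so the digraph of $B$ fails to be strongly connected, contradicting irreducibility; therefore $\rho(A)<\rho(B)$. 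The main obstacle I anticipate is precisely this last step: since $A$ need not be irreducible, its Perron vector $y$ may have zero coordinates, and the whole argument hinges on showing that this zero set $S$ would disconnect $B$.
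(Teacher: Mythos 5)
Your proof is correct. Note, however, that the paper itself does not prove this lemma at all: it is quoted as a known result with citations to Berman--Plemmons and Horn--Johnson, so there is no in-paper argument to compare against. What you have written is essentially the standard textbook proof from those references: monotonicity of powers plus Gelfand's formula $\rho(M)=\lim_{k\to\infty}\|M^k\|^{1/k}$ for the weak inequality, and for the strict part the Perron--Frobenius sandwich $\rho\,x^\top y = x^\top A y \le x^\top B y = \rho\,x^\top y$ with a positive left eigenvector $x$ of $B$, followed by the support argument showing that the zero set $S$ of $y$ would give $B_{ij}=0$ for all $i\in S$, $j\notin S$, i.e.\ a block-triangular (reducible) structure contradicting irreducibility. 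All steps check out, including the two points that genuinely need care: that $S$ is nonempty (forced by $A\neq B$) and proper (forced by $y\neq 0$), and that the eigenvector $y$ of the possibly reducible $A$ need only be nonnegative, which is exactly what the argument accommodates.
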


By Lemma \ref{le:2}, we have the following result in terms of $D_\alpha$ spectral radius of digraphs.

\begin{corollary}\label{co:1} Let $G$ be a strongly connected digraph with $u, v\in V(G)$ and $(u,v)\notin E(G)$.
Then $\mu_\alpha(G)>\mu_\alpha(G+(u,v))$.
\end{corollary}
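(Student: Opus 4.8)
The plan is to reduce the statement to the entrywise monotonicity of the matrix $D_\alpha$ under arc addition and then invoke Lemma \ref{le:2}. First I would observe that inserting the arc $(u,v)$ cannot lengthen any shortest directed path: every directed $v_i$-to-$v_j$ path present in $G$ survives in $G+(u,v)$, so $d_{G+(u,v)}(v_i,v_j)\leq d_G(v_i,v_j)$ for every ordered pair $(v_i,v_j)$. Summing the $i$-th row gives $D_{v_i}(G+(u,v))\leq D_{v_i}(G)$ for each $i$, so the transmission diagonal does not increase either.

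Next I would compare the two convex combinations position by position. The off-diagonal $(i,j)$ entry of $D_\alpha$ equals $(1-\alpha)d_{ij}$ and the diagonal $(i,i)$ entry equals $\alpha D_i$ (since $d_{ii}=0$); both are nondecreasing in the underlying distances, so the previous step yields $D_\alpha(G+(u,v))\leq D_\alpha(G)$ as nonnegative matrices. To upgrade this to the strict matrix inequality $D_\alpha(G+(u,v))<D_\alpha(G)$ in the sense of Definition \ref{de:1}, I would point to a single position where the drop is genuine: since $(u,v)\notin E(G)$ we have $d_G(u,v)\geq 2$, whereas $d_{G+(u,v)}(u,v)=1$, so the $(u,v)$ off-diagonal entry falls from $(1-\alpha)d_G(u,v)$ to $(1-\alpha)$, a strict decrease because the range $0\leq\alpha<1$ forces $1-\alpha>0$.

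Finally I would note that $G+(u,v)$ inherits strong connectivity from $G$, so both distance matrices have strictly positive off-diagonal entries and in particular $D_\alpha(G)$ is irreducible. Applying Lemma \ref{le:2} with $A=D_\alpha(G+(u,v))$ and $B=D_\alpha(G)$ then gives $\rho(D_\alpha(G+(u,v)))<\rho(D_\alpha(G))$. Since $G$ and $G+(u,v)$ are strongly connected, the Perron--Frobenius theorem identifies each spectral radius with the corresponding $D_\alpha$ spectral radius, so this reads exactly $\mu_\alpha(G+(u,v))<\mu_\alpha(G)$, as claimed.

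The only delicate point, and the one I would verify explicitly, is the strictness hypothesis of Lemma \ref{le:2}: one must confirm both that at least one entry strictly decreases (secured by $(u,v)\notin E(G)$ together with $\alpha<1$) and that the dominating matrix is irreducible (secured by strong connectivity). Everything else is the routine fact that shortest-path distances can only shrink when an arc is added.
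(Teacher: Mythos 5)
Your proof is correct and follows exactly the route the paper intends: the paper derives this corollary directly from Lemma~\ref{le:2}, and your argument supplies the needed details --- distances (hence all entries of $D_\alpha$) weakly decrease under arc addition, the $(u,v)$ entry strictly decreases since $d_G(u,v)\geq 2$ and $1-\alpha>0$, and $D_\alpha(G)$ is irreducible by strong connectivity, so the strict spectral radius inequality follows. Nothing is missing; your explicit attention to the strictness hypotheses of Lemma~\ref{le:2} is precisely what makes the deduction valid on the paper's range $0\leq\alpha<1$.
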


By Lemma \ref{le:1} and Corollary \ref{co:1}, we have the following theorem.

\begin{theorem}\label{th:2} Let $G$ be a strongly connected digraph. Then
$n-1\leq\mu_\alpha(G)\leq \frac{n(n-1)}{2}$, $\mu_\alpha(G)=n-1$ if and only if $G\cong\overset{\longleftrightarrow}{K_{n}}$, and
$\mu_\alpha(G)=\frac{n(n-1)}{2}$ if and only if $G\cong C_{n}$.
\end{theorem}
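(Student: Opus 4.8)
The plan is to apply Lemma \ref{le:1} directly to the matrix $D_\alpha(G)$ itself, rather than to a similarity transform as in Theorem \ref{th:1}. The first step is a one-line computation of the row sums: since $d_{ii}=0$, the $i$-th row sum of $D_\alpha(G)=\alpha\,Diag(Tr)+(1-\alpha)D(G)$ equals $\alpha D_i+(1-\alpha)\sum_{j=1}^n d_{ij}=\alpha D_i+(1-\alpha)D_i=D_i$, the transmission of $v_i$. Because $G$ is strongly connected, every off-diagonal entry of $D(G)$ is positive, so for $0\le\alpha<1$ the matrix $D_\alpha(G)$ is nonnegative and irreducible. Lemma \ref{le:1} then gives $\min_i D_i\le\mu_\alpha(G)\le\max_i D_i$, with either equality holding if and only if $D_1=\cdots=D_n$.

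The second step reduces the theorem to bounding a single transmission: I will show $n-1\le D_i\le\frac{n(n-1)}{2}$ for every $i$. The lower bound is immediate since $v_i$ has $n-1$ distances each at least $1$, with equality if and only if every distance from $v_i$ equals $1$. For the upper bound, order the distances from $v_i$ as $1\le d_{(1)}\le\cdots\le d_{(n-1)}$; taking a shortest directed path realizing the eccentricity of $v_i$ produces a vertex at each distance $1,2,\ldots,\mathrm{ecc}(v_i)$, which forces $d_{(k)}\le k$ for all $k$, hence $D_i\le\sum_{k=1}^{n-1}k=\frac{n(n-1)}{2}$. Combining with the first step yields the stated two-sided bound on $\mu_\alpha(G)$.

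Third, I handle the equality cases through the ``all transmissions equal'' criterion. If $\mu_\alpha(G)=n-1$, then $\min_i D_i=n-1$ and Lemma \ref{le:1} forces every $D_i=n-1$; by the tightness of the lower transmission bound every distance is $1$, so $G\cong\overset{\longleftrightarrow}{K_n}$ (alternatively, since any strongly connected $G\neq\overset{\longleftrightarrow}{K_n}$ is a proper spanning subdigraph of $\overset{\longleftrightarrow}{K_n}$, repeated use of Corollary \ref{co:1} gives $\mu_\alpha(G)>\mu_\alpha(\overset{\longleftrightarrow}{K_n})=n-1$). If $\mu_\alpha(G)=\frac{n(n-1)}{2}$, then every $D_i=\frac{n(n-1)}{2}$, so by the equality analysis of the upper transmission bound each $v_i$ has distances exactly $1,2,\ldots,n-1$; in particular each vertex has a unique out-neighbor, so all out-degrees equal $1$, and a strongly connected digraph with all out-degrees equal to $1$ must be the single directed cycle $C_n$. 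The converses are direct: $\overset{\longleftrightarrow}{K_n}$ is $(n-1)$-distance regular and $C_n$ is $\frac{n(n-1)}{2}$-distance regular, so by the equality case of Lemma \ref{le:1} their $D_\alpha$ spectral radii are $n-1$ and $\frac{n(n-1)}{2}$, respectively.

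The main obstacle is the sharp upper estimate on the transmission together with its equality discussion: establishing $d_{(k)}\le k$ rigorously (via a shortest path attaining the eccentricity) and then deducing from $D_i=\frac{n(n-1)}{2}$ for all $i$ that every out-degree equals $1$, and hence that $G$ is exactly $C_n$. The spectral bounds themselves are essentially immediate once the row-sum identity $R_i(D_\alpha(G))=D_i$ is observed.
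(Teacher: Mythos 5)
Your proof is correct and follows essentially the same route as the paper, whose (unwritten) argument consists of exactly the two ingredients you use: Lemma \ref{le:1} applied to $D_\alpha(G)$, whose row sums are the transmissions $D_i$, together with Corollary \ref{co:1}. The transmission bounds $n-1\le D_i\le \frac{n(n-1)}{2}$ and their equality analysis (all distances equal to $1$ forcing $\overset{\longleftrightarrow}{K_n}$, distances exactly $1,2,\ldots,n-1$ from every vertex forcing $C_n$) are precisely the details the paper leaves implicit, and you supply them correctly.
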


\noindent\begin{lemma}\label{le:3} (\cite{BP}) Let $B$ be a nonnegative irreducible matrix and
$X=(x_1,x_2,$ $\ldots,x_n)^T$ be any nonzero nonnegative vector. If $\beta,\gamma\geq0$ such that $\beta X< BX<\gamma X$, then $\beta<\rho(B)<\gamma$.
\end{lemma}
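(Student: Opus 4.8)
The plan is to exploit the strictly positive left Perron eigenvector of $B$ supplied by the Perron--Frobenius theorem for irreducible nonnegative matrices. Since $B$ is nonnegative and irreducible, $\rho(B)$ is an eigenvalue of $B$ (equivalently of $B^T$), and there exists a vector $Y=(y_1,y_2,\ldots,y_n)^T$ with $y_i>0$ for all $i$ satisfying $Y^T B = \rho(B) Y^T$. This left eigenvector is the engine that converts the componentwise vector inequalities $\beta X < BX$ and $BX < \gamma X$ into scalar inequalities relating $\beta, \gamma$ and $\rho(B)$.

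First I would unpack the hypothesis in the sense of Definition \ref{de:1}: $\beta X < BX$ means that the vector $BX - \beta X$ is nonnegative and nonzero, and similarly $\gamma X - BX$ is nonnegative and nonzero. Next I would form the inner product with $Y$. Because every $y_i>0$ while $BX - \beta X$ is nonnegative with at least one positive entry, the sum $Y^T(BX-\beta X) = \sum_{i=1}^n y_i (BX-\beta X)_i$ is strictly positive. Substituting $Y^T B = \rho(B) Y^T$ gives $Y^T(BX - \beta X) = (\rho(B)-\beta) Y^T X$, so that $(\rho(B)-\beta) Y^T X > 0$.

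To finish the lower bound I would observe that $Y^T X = \sum_{i=1}^n y_i x_i > 0$, since each $y_i>0$ while $X \geq 0$ is nonzero, so at least one term $y_i x_i$ is strictly positive. Dividing the inequality by the positive quantity $Y^T X$ yields $\rho(B) - \beta > 0$, that is $\beta < \rho(B)$. The upper bound follows from the symmetric computation $Y^T(\gamma X - BX) = (\gamma - \rho(B)) Y^T X > 0$, whence $\gamma > \rho(B)$.

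The only delicate point --- and the crux on which the strictness rests --- is the positivity of \emph{every} entry of the left eigenvector $Y$. It is precisely the irreducibility of $B$ that guarantees $y_i>0$ for all $i$ (rather than merely $Y$ nonnegative with possible zero entries); this is what upgrades the conclusion from $\beta \leq \rho(B)$ to the strict $\beta < \rho(B)$, and likewise ensures $Y^T X > 0$ for every nonzero nonnegative $X$. If $B$ were only reducible the strict inequalities could fail, so the irreducibility hypothesis is used in an essential rather than cosmetic way.
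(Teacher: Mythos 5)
Your proof is correct. The paper itself gives no proof of this lemma---it is quoted from Berman and Plemmons \cite{BP}---and your argument via the strictly positive left Perron eigenvector $Y^T B = \rho(B)Y^T$ is exactly the standard one found there: pairing $Y$ with the nonnegative, nonzero vectors $BX-\beta X$ and $\gamma X - BX$ yields $(\rho(B)-\beta)Y^TX>0$ and $(\gamma-\rho(B))Y^TX>0$, and since $Y^TX>0$ the strict bounds follow. You also correctly identify where irreducibility is essential (it forces $y_i>0$ for every $i$, which is what makes both inner products strictly positive), and your closing remark that strictness can fail for reducible $B$ is accurate.
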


By Lemma \ref{le:3}, we have the following result in terms of $D_\alpha$ spectral radius of digraphs.

\begin{corollary}\label{co:c3} Let $G$ be a strongly connected digraph with $V(G)=\{v_1, v_2, \ldots, v_n\}$. Then
$\lambda_\alpha(G)> \alpha Tr_{\max}$, where $Tr_{\max}$ denote the maximum vertex transmission of $G$
\end{corollary}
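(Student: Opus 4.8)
The plan is to apply Lemma~\ref{le:3} to the matrix $B=D_\alpha(G)$, whose spectral radius is exactly $\rho(D_\alpha(G))=\mu_\alpha(G)=\lambda_\alpha(G)$. First I would record that for $0\le\alpha<1$ this matrix is nonnegative and irreducible: its diagonal entries are $\alpha D_i\ge0$, while for $i\ne j$ strong connectivity forces $d_{ij}\ge1$, so the off-diagonal entries $(1-\alpha)d_{ij}$ are strictly positive. Hence $D_\alpha(G)$ satisfies the hypotheses of Lemma~\ref{le:3}, and it remains only to exhibit a suitable nonnegative test vector $X$ together with the scalar $\beta=\alpha Tr_{\max}$ for which $\beta X<D_\alpha(G)X$.

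Let $v_k$ be a vertex with $D_k=Tr_{\max}$. The natural test vector is the $k$-th standard basis vector $e_k$: since the $k$-th column of $D_\alpha(G)$ has entry $\alpha D_k=\alpha Tr_{\max}$ in position $k$ and entry $(1-\alpha)d_{ik}>0$ in every position $i\ne k$, we get $(D_\alpha(G)e_k)_k=\alpha Tr_{\max}$ and $(D_\alpha(G)e_k)_i>0=\beta(e_k)_i$ for $i\ne k$. Thus $\beta e_k\le D_\alpha(G)e_k$ coordinatewise with strict inequality off position $k$, i.e.\ $\beta e_k<D_\alpha(G)e_k$ in the sense of Definition~\ref{de:1}. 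This is precisely the inequality that produces the desired lower bound.

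The main obstacle is that Lemma~\ref{le:3} is stated two-sidedly and also demands a scalar $\gamma$ with $D_\alpha(G)X<\gamma X$; the choice $X=e_k$ cannot supply one, because $e_k$ has zero coordinates while $D_\alpha(G)e_k$ is positive there. I would remove this obstruction by perturbing to the strictly positive vector $X_\varepsilon=e_k+\varepsilon\mathbf{1}$ with $\varepsilon>0$ small. Using the elementary identity $D_\alpha(G)\mathbf{1}=(D_1,\dots,D_n)^T$ (each row of $D_\alpha(G)$ sums to $\alpha D_i+(1-\alpha)\sum_j d_{ij}=D_i$), one has $(D_\alpha(G)X_\varepsilon)_i=(D_\alpha(G)e_k)_i+\varepsilon D_i$. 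A short check shows $(D_\alpha(G)X_\varepsilon-\beta X_\varepsilon)_k=\varepsilon(1-\alpha)Tr_{\max}>0$ and, for $i\ne k$, $(D_\alpha(G)X_\varepsilon-\beta X_\varepsilon)_i=(1-\alpha)d_{ik}+\varepsilon(D_i-\alpha Tr_{\max})$, which stays positive for all small $\varepsilon$ because $(1-\alpha)d_{ik}\ge 1-\alpha>0$ is bounded away from zero. Hence $\beta X_\varepsilon<D_\alpha(G)X_\varepsilon$, and since $X_\varepsilon\gg0$ one may take $\gamma=\max_i (D_\alpha(G)X_\varepsilon)_i/(X_\varepsilon)_i+1$ to secure $D_\alpha(G)X_\varepsilon<\gamma X_\varepsilon$. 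Lemma~\ref{le:3} then yields $\alpha Tr_{\max}=\beta<\rho(D_\alpha(G))=\mu_\alpha(G)$, as claimed.

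If one prefers to avoid the perturbation entirely, the same conclusion drops out from the Perron vector $X=(x_1,\dots,x_n)^T\gg0$ of $D_\alpha(G)$: reading off the $k$-th coordinate of $D_\alpha(G)X=\mu_\alpha(G)X$ gives $\mu_\alpha(G)x_k=\alpha Tr_{\max}x_k+(1-\alpha)\sum_{j\ne k}d_{kj}x_j$, where the last sum is strictly positive because $G$ is strongly connected on $n\ge2$ vertices; dividing by $x_k>0$ gives $\mu_\alpha(G)>\alpha Tr_{\max}$. I would present the Lemma~\ref{le:3} version to stay consistent with the corollary's framing, keeping the Perron-vector remark as a sanity check.
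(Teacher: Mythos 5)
Your proposal is correct and follows essentially the same route as the paper: the paper likewise takes the indicator vector of a maximum-transmission vertex $v_1$, uses strong connectivity to get $D_\alpha(G)X > \alpha Tr_{\max}X$, and invokes Lemma~\ref{le:3}. The only difference is that the paper applies the two-sided lemma directly to this vector without supplying the upper scalar $\gamma$ (impossible there, since the vector has zero coordinates); your $\varepsilon$-perturbation, and equally your Perron-vector remark, cleanly patch this technical point that the paper glosses over.
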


\begin{proof} With loss of generality, let $D_1=Tr_{\max}$ is the maximum vertex transmission.
Taking $X=(1,0,\ldots,0,0,0\ldots,0)^T$,
that is, all the entries of $X$ are 0 except $x_1=1$, where $x_1$ corresponding to the vertex $v_1$. Since
$G$ is strongly connected, $d_{j1}\geq1$ for all $v_j\neq v_1$ and $v_j\in V(G)$.
Thus $D_\alpha(G)X>\alpha D_1X=\alpha Tr_{\max} X$.
Therefore, by Lemma \ref{le:3}, we have $\mu_\alpha(G)>\alpha Tr_{\max}$.
\end{proof}

\section{The minimum $D_\alpha$ spectral radius of strongly connected digraphs with given dichromatic number}

Let $\mathcal{C}_n^k$ denote the set of strongly connected digraphs
with order $n$ and dichromatic number $\chi(G)=k\geq2$. Let $[V_1,V_2]$ denote the arcs between $V_1$ and $V_2$.
Let $\mathcal{T}_n^k$ denote the set of digraphs of order $n$ with $V(\mathcal{T}_n^k)=V^1\cup V^2\ldots\cup V^k$,
where $V^i$ $(i=1,2,\ldots,k)$ is a transitive tournament
and $[V^i,V^j]=\{(v^i_s,v^j_t), (v^j_t,v^i_s): v^i_s\in V^i, v^j_t\in V^j\}$ for all $i\neq j$ and $i,j\in\{1,2,\ldots,k\}$.
Let $\mathcal{T}_n^{k*}$ denote the digraph in $\mathcal{T}_n^k$
with $||V^i|-|V^j||\leq1$ for all $i,j\in\{1,2,\ldots,k\}$.

In \cite{LiS}, Lin and Shu proved that $\mathcal{T}_n^{k*}$ attains the minimal $D_0$ spectral radius
among all strongly connected digraphs with given dichromatic number.
In \cite{LWM}, Li et al. determined that $\mathcal{T}_n^{k*}$ also attains the minimum $D_{\frac{1}{2}}$
spectral radius among all strongly connected digraphs with given dichromatic number.
We generalize their results to $0\leq\alpha<1$. In the rest of this section,
 we will show that $\mathcal{T}_n^{k*}$ achieves the
minimum $D_\alpha$ spectral radius among all digraphs in $\mathcal{C}_n^k$.

\noindent\begin{lemma}\label{le:10} (\cite{BoMu}) Let $G$ be a digraph with
no directed cycle. Then $\delta^-=0$, where $\delta^-$ denotes the minimum
indegree of $G$, and there is an ordering
$v_1, v_2, \ldots, v_n$ of $V(G)$ such that, for $1 \leq i \leq n$,
every arc of $G$ with head $v_i$ has its tail in $\{v_1, v_2, \ldots, v_{i-1}\}$.
\end{lemma}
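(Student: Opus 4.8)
The plan is to prove the two assertions in turn, since the ordering in the second part is built by repeatedly invoking the first. For the claim $\delta^-=0$, I would argue by contradiction. Suppose every vertex of $G$ had indegree at least one. Then, starting from an arbitrary vertex $u_0$, I could choose an in-neighbor $u_1$, an in-neighbor $u_2$ of $u_1$, and so on, producing an infinite backward walk $\cdots \to u_2 \to u_1 \to u_0$. Because $V(G)$ is finite, some vertex must repeat, say $u_i=u_j$ with $i<j$, and the segment $u_j\to u_{j-1}\to\cdots\to u_i=u_j$ is a closed directed walk, hence contains a directed cycle, contradicting acyclicity. Thus at least one vertex has indegree $0$, i.e. $\delta^-=0$. (Equivalently, taking a longest directed path and observing that its initial vertex can have no in-neighbor gives the same conclusion at once.)

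For the ordering, the plan is induction on $n=|V(G)|$. The base case $n=1$ is trivial. For the inductive step, I would use the first part to select a vertex $v_1$ of indegree $0$. Deleting $v_1$ leaves a subdigraph $G-v_1$ that is still acyclic, since deleting a vertex cannot create a directed cycle, so by the induction hypothesis its $n-1$ vertices admit an ordering $v_2,v_3,\ldots,v_n$ such that in $G-v_1$ every arc with head $v_i$ has its tail in $\{v_2,\ldots,v_{i-1}\}$. I then claim $v_1,v_2,\ldots,v_n$ is the required ordering of $G$. For $i=1$ the condition is vacuous, since $v_1$ has indegree $0$ and so no arc has head $v_1$. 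For $i\ge 2$, any arc of $G$ with head $v_i$ either has tail $v_1\in\{v_1,\ldots,v_{i-1}\}$, or has a tail different from $v_1$, in which case it is also an arc of $G-v_1$ and the induction hypothesis places its tail in $\{v_2,\ldots,v_{i-1}\}\subseteq\{v_1,\ldots,v_{i-1}\}$. In either case the condition holds.

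The routine but essential point to get right in the inductive step is the bookkeeping for arcs whose tail is the deleted source $v_1$: these are exactly the arcs invisible to the induction hypothesis, and one must check that prepending $v_1$ in position $1$ still leaves every such arc pointing ``forward.'' Beyond this there is no real obstacle; the statement is the standard topological-ordering fact for acyclic digraphs, and the only genuine ingredient is the finiteness argument that forces a source to exist.
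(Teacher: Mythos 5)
Your proof is correct. The paper does not actually prove this lemma---it is quoted as a known fact from Bondy and Murty \cite{BoMu}---so there is no in-paper argument to compare against; your argument (a source must exist, by the finite backward-walk or longest-path argument, followed by induction after deleting that source, with the explicit check on arcs whose tail is the deleted vertex) is the complete, standard textbook proof of the topological-ordering property of acyclic digraphs.
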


Let $G$ be a strongly connected digraph with order $n$ and dichromatic
number $\chi(G)=k\geq2$. By the definition, $G$ has $k$-color classes
and each is an acyclic set. Suppose the $k$-color classes are
$V^1, V^2,\ldots, V^k$ with orders $n_1, n_2,\ldots, n_k$, respectively. Without
loss of generality, we suppose that $n_1\geq n_2\geq\ldots\geq n_k$.
By Corollary \ref{co:1}, we know that the addition of arcs will decrease the
$D_\alpha$ spectral radius. Then, by Lemma \ref{le:10}, we know that the digraphs which achieve the minimum $D_\alpha$
spectral radius must be the digraphs in $\mathcal{T}_n^k$. Next we will prove that
the digraph $\mathcal{T}_n^{k*}$ has the minimum $D_\alpha$ spectral radius in $\mathcal{T}_n^k$.

\noindent\begin{theorem}\label{th:c-3} Let $G\in\mathcal{T}_n^k$, with
the $k$-color classes $V^1, V^2,\ldots, V^k$ satisfying $n_1\geq n_2\geq\ldots\geq n_k$,
where $n_i=|V^i|$ and $V^i$ is a transitive tournament for each $i\in\{1,2,\ldots,k\}$.
Then $\mu_\alpha(G)\geq \mu_\alpha(\mathcal{T}_n^{k*})$ with equality holds if and only if
$G\cong\mathcal{T}_n^{k*}$.
\end{theorem}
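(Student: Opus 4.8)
The plan is to work entirely inside $\mathcal{T}_n^k$ (the reduction to this family is already handled by Corollary \ref{co:1} and Lemma \ref{le:10}) and to show that any imbalance in the class sizes can be removed by a single-vertex move that strictly lowers $\mu_\alpha$. First I would record the metric structure of $G\in\mathcal{T}_n^k$: since each $V^i$ is a transitive tournament and distinct classes are joined by all arcs in both directions, the distance from a vertex to a later vertex of its own class, or to any vertex of another class, is $1$, while the distance to an earlier vertex of its own class is $2$. A direct count then gives that a vertex in position $s$ of a transitive tournament (arcs oriented from smaller to larger index) has transmission exactly $n+s-2$, independent of its class. Hence $G$ is determined up to isomorphism by its size sequence $(n_1,\dots,n_k)$, and $\mathcal{T}_n^{k*}$ is the unique member whose sizes differ pairwise by at most $1$. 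Because $D_\alpha(G)$ is not symmetric for a digraph, the comparison must be carried out through Perron--Frobenius data rather than a Rayleigh quotient.

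The decisive step is to understand the Perron vector $X=(x_v)$ of $D_\alpha(G)$. Writing the eigenequation for a vertex $v$ at position $s$ of class $i$ and collecting terms yields $\mu_\alpha(G)\,x_v=\alpha(n+s-2)x_v+(1-\alpha)(\sigma-x_v+B_{i,s})$, where $\sigma=\sum_u x_u$ and $B_{i,s}$ is the sum of the entries of the vertices preceding $v$ in class $i$. I would then prove by induction on $s$ that $B_{i,s}$, and therefore $x_v$, depends only on the position $s$ and not on the class: at $s=1$ one has $B_{i,1}=0$ for every class, and the inductive step is immediate since $B_{i,s}$ is the sum of the already-determined position-values $x^{(1)},\dots,x^{(s-1)}$. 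Solving the resulting scalar equation gives $x^{(s)}=(1-\alpha)(\sigma+B_s)/\big(\mu_\alpha(G)-\alpha(n+s-2)+(1-\alpha)\big)$ with $B_s=x^{(1)}+\dots+x^{(s-1)}$; by Corollary \ref{co:c3} the denominator is positive, and as $s$ grows the numerator increases while the denominator decreases, so $x^{(1)}<x^{(2)}<\cdots$ strictly (uniformly for $\alpha\in[0,1)$).

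With this in hand I would compare $G$, assumed to satisfy $n_p\ge n_q+2$ for some classes $p,q$, with the digraph $G'$ obtained by moving the sink $w$ of $V^p$ to become the new sink of $V^q$. A short check of distances shows that $D_\alpha(G')$ and $D_\alpha(G)$ differ only in row $w$: the diagonal entry changes by $\alpha(n_q-n_p+1)$, the entries toward the old class $V^q$ increase by $1-\alpha$, and the entries toward $V^p\setminus\{w\}$ decrease by $1-\alpha$. Taking $X$ to be the right Perron vector of $D_\alpha(G)$ and $Y$ the left Perron vector of $D_\alpha(G')$, the identity $(\mu_\alpha(G')-\mu_\alpha(G))\,Y^{T}X=Y^{T}(D_\alpha(G')-D_\alpha(G))X$ reduces the comparison to the sign of $\alpha(n_q-n_p+1)x^{(n_p)}-(1-\alpha)\sum_{s=n_q+1}^{n_p-1}x^{(s)}$, where the class-independence of the entries from the previous step is exactly what lets me evaluate the two class-sums. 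Both terms are $\le 0$, and the second is strictly negative since $n_p-1\ge n_q+1$; as $Y^{T}X>0$ this forces $\mu_\alpha(G')<\mu_\alpha(G)$.

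Finally, each such move strictly decreases $\sum_i n_i^2$, so after finitely many moves the size sequence becomes balanced, and the balanced sequence is the unique minimizer of $\sum_i n_i^2$ among partitions of $n$ into $k$ parts, i.e. it corresponds to $\mathcal{T}_n^{k*}$. This gives $\mu_\alpha(G)\ge\mu_\alpha(\mathcal{T}_n^{k*})$ with equality precisely when no move is available, that is when $G\cong\mathcal{T}_n^{k*}$. I expect the main obstacle to be the Perron-vector analysis of the second paragraph --- establishing that the entries are governed solely by position and are strictly increasing --- because this class-independence is what makes the one-row perturbation explicitly computable; the distance bookkeeping and the left/right eigenvector identity are then routine.
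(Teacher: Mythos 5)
Your proposal is correct and follows essentially the same route as the paper: the same structural analysis of the Perron vector (entries determined solely by position within a class and strictly increasing, which are the paper's Claims 1 and 2), the same operation of re-attaching the sink of a class with $n_p\ge n_q+2$ as the new sink of the smaller class, and the same one-row perturbation computation whose sign decides the comparison. The only cosmetic differences are that you solve the eigen-equations explicitly instead of subtracting them pairwise, and you finish with the left--right eigenvector identity where the paper applies Lemma \ref{le:3} to the right Perron vector of $G$; both amount to the identical sign calculation, and your $\sum_i n_i^2$ argument merely makes the paper's implicit termination claim explicit.
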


\begin{proof} Let $G$ be an arbitrary digraph in
$\mathcal{T}_n^k$ and $\mu_\alpha(G)=\mu$ be the $D_\alpha$ spectral radius of $G$.
Since each $V^i$ is a transitive tournament, we can give a vertex ordering
$\{v^i_1, v^i_2,\ldots, v^i_{n_i}\}$ such that $(v^i_s,v^i_t)\in E(G)$, for all $s<t$.
Thus we suppose that $X= (x^1_1, x^1_2,\ldots, x^1_{n_1},
x^2_1,x^2_2,$ $\ldots, x^2_{n_2},\ldots,x^k_1, x^k_2,\ldots, x^k_{n_k})^T$
is the Perron vector of $D_\alpha(G)$ corresponding to $\mu=\mu_\alpha(G)$, where $x^i_j$
 corresponds to $v^i_j$ for each $1 \leq i \leq k$ and $1 \leq j \leq n_i$. Then
we have the following two claims.
\\{\bf Claim 1.} $x^i_1< x^i_2<\ldots<x^i_{n_i}$ ($1\leq i\leq k$).
\\From $D_\alpha(G)X=\mu X$, we have
\begin{align}
\mu x^i_j&=\alpha D_{v^i_j}x^i_j+(1-\alpha)\sum\limits^{n_i}_{t=j+1}x^i_t+2(1-\alpha)\sum\limits^{j-1}_{t=1}x^i_t+(1-\alpha)L\notag\\
&=\alpha D_{v^i_j}x^i_j+(1-\alpha)x^i_{j+1}+(1-\alpha)\sum\limits^{n_i}_{t=j+2}x^i_t+2(1-\alpha)\sum\limits^{j-1}_{t=1}x^i_t+(1-\alpha)L.
\end{align}

\begin{align}
\mu x^i_{j+1}&=\alpha D_{v^i_{j+1}}x^i_{j+1}+(1-\alpha)\sum\limits^{n_i}_{t=j+2}x^i_t+2(1-\alpha)x^i_j+2(1-\alpha)\sum\limits^{j-1}_{t=1}x^i_t+
(1-\alpha)L,
\end{align}
where $L=\sum\limits^k_{\scriptstyle m=1\atop\scriptstyle m\neq i}\sum\limits^{n_m}_{t=1}x^m_t$,
$D_{v^i_j}=\sum\limits^k_{\scriptstyle m=1 \atop \scriptstyle m\neq i} n_m + (n_i-j)+2(j-1)=n+j-2$,
$D_{v^i_{j+1}}=\sum\limits^k_{\scriptstyle m=1 \atop \scriptstyle m\neq i} n_m + n_i-(j+1)+2j=n+j-1$.

Subtracting (1) from (2), we get
$\mu (x^i_{j+1}-x^i_j)=(\alpha(n+j)-1)(x^i_{j+1}-x^i_j)+x^i_{j}$. By Corollary \ref{co:c3}, we have
$\mu>\alpha D_{v^i_{j+1}}=\alpha(n+j-1)$. So we further have
$(\mu-\alpha(n+j)+1)(x^i_{j+1}-x^i_j)=x^i_{j}>0$, which implies
$x^i_{j+1}>x^i_j$. Therefore, Claim 1 holds.
\\{\bf Claim 2.} $x^1_i= x^2_i=\ldots=x^k_{i}, 1\leq i\leq n_k$.
\\We verify the claim by induction on $i$. If $i=1$, then we have
\begin{equation}\label{eq:3}
\mu x^p_1=\alpha D_{v^p_1}x^p_1+(1-\alpha)\sum\limits^{n_p}_{t=2}x^p_t+(1-\alpha)L_1
+(1-\alpha)\sum\limits^{n_q}_{t=1}x^q_t
\end{equation}
\begin{equation}\label{eq:4}
\mu x^q_1=\alpha D_{v^q_1}x^q_1+(1-\alpha)\sum\limits^{n_q}_{t=2}x^q_t+(1-\alpha)L_1
+(1-\alpha)\sum\limits^{n_p}_{t=1}x^p_t
\end{equation}
where $L_1=\sum\limits^k_{\scriptstyle m=1, \scriptstyle m\neq p \atop\scriptstyle m\neq q}\sum\limits^{n_m}_{t=1}x^m_t$,
$D_{v^p_1}=D_{v^q_1}=n-1$.

Subtracting \eqref{eq:4} from \eqref{eq:3}, we get
$\mu (x^p_1-x^q_1)=\alpha(n-1)(x^p_1-x^q_1)-(1-\alpha)(x^p_1-x^q_1)=(\alpha(n-1)-(1-\alpha))(x^p_1-x^q_1)$. By Corollary \ref{co:c3}, we have
$\mu>\alpha(n-1)>\alpha(n-1)-(1-\alpha)$. This shows that
$x^p_1=x^q_{1}$ for all $1\leq p\neq q\leq k$.

Now we suppose that it holds for all $i< N\leq n_k$, that is,
$x^1_t= x^2_t=\ldots=x^k_{t}$, for each $t\in\{1,2,\ldots,N-1\}$. Next we
will consider the case when $i=N$, we have
\begin{equation}\label{eq:5}
\mu x^p_N=\alpha D_{v^p_N}x^p_N+(1-\alpha)\sum\limits^{n_p}_{t=N+1}x^p_t+2(1-\alpha)\sum\limits^{N-1}_{t=1}x^p_t+(1-\alpha)L_1
+(1-\alpha)\sum\limits^{n_q}_{t=1}x^q_t
\end{equation}
\begin{equation}\label{eq:6}
\mu x^q_N=\alpha D_{v^q_N}x^q_N+(1-\alpha)\sum\limits^{n_q}_{t=N+1}x^q_t+2(1-\alpha)\sum\limits^{N-1}_{t=1}x^q_t+(1-\alpha)L_1
+(1-\alpha)\sum\limits^{n_p}_{t=1}x^p_t
\end{equation}
where $L_1$ defined as the above, $D_{v^p_N}=D_{v^q_{N}}=n+N-2$.

Subtracting \eqref{eq:6} from \eqref{eq:5}, we get
$$\mu(x^p_N-x^q_N)=\alpha(n+N-2)(x^p_N-x^q_N)-(1-\alpha)\sum\limits^{N-1}_{t=1}x^q_t+(1-\alpha)x^q_N
+(1-\alpha)\sum\limits^{N-1}_{t=1}x^p_t-(1-\alpha)x^p_N.$$
By the inductive hypothesis, we further obtain that
$\mu(x^p_N-x^q_N)=\alpha(n+N-2)(x^p_N-x^q_N)-(1-\alpha)(x^p_N-x^q_N)$. By Corollary \ref{co:c3}, we have
$\mu>\alpha D_{v^p_N}=\alpha(n+N-2)$. This shows that
$x^p_N=x^q_{N}$ for all $1\leq p\neq q\leq k$. Therefore, Claim 2 holds.

If $G\ncong\mathcal{T}_n^{k*}$, then there exist $n_i$, $n_j$ such that
$|n_i-n_j|>1$. Without loss of generality, we suppose that $n_i\geq n_j+2$.
By Claim 1, we know that $x^i_1< x^i_2<\ldots<x^i_{n_i}$, so let
$G'=G-\{(v^i_{n_i},v^j_t): t=1,2,\ldots,n_j\}+\{(v^i_{n_i},v^i_l): l=1,2,\ldots,n_i-1\}$.
Then $G'\in\mathcal{T}_n^k$. For the Perron vector $X$ of $D_\alpha(G)$ corresponding to $\mu_\alpha(G)$ and for any
$t=1,2,\ldots,n$, we have
$$(D_\alpha(G) X)_t=\alpha D_t(G)x_t+(1-\alpha)\sum\limits_{s=1}^nd_{G}(v_t,v_s)x_s,$$
$$(D_\alpha(G') X)_t=\alpha D_t(G')x_t+(1-\alpha)\sum\limits_{s=1}^nd_{G'}(v_t,v_s)x_s.$$
From the above two equations we can observe that if $v_t\neq v^i_{n_i}$
then $(D_\alpha(G){\bf X})_t=(D_\alpha(G'){\bf X})_t$, otherwise
\begin{equation}\label{eq:7}
(D_\alpha(G) X)_t=\alpha D_{v^i_{n_i}}(G)x^i_{n_i}+(1-\alpha)L_3+(1-\alpha)\sum\limits^{n_j}_{t=1}x^j_t+2(1-\alpha)\sum\limits^{n_i-1}_{t=1}x^i_t,
\end{equation}
\begin{equation}\label{eq:8}
(D_\alpha(G')X)_t=\alpha D_{v^i_{n_i}}(G)x^i_{n_i}+(1-\alpha)L_3+(1-\alpha)\sum\limits^{n_i-1}_{t=1}x^i_t+2(1-\alpha)\sum\limits^{n_j}_{t=1}x^j_t
\end{equation}
where $L_3=\sum\limits^k_{\scriptstyle m=1, \scriptstyle m\neq i \atop\scriptstyle m\neq j}\sum\limits^{n_m}_{t=1}x^m_t$, $D_{v^i_{n_i}}(G)=n+n_i-2$,
$D_{v^i_{n_i}}(G')=n+n_j-1$.

By Claim 2, $x^1_i= x^2_i=\ldots=x^k_{i}, 1\leq i\leq n_k$.
Subtracting \eqref{eq:8} from \eqref{eq:7}, we get

\begin{align*}
(D_\alpha(G)X)_t-(D_\alpha(G')X)_t&=\alpha(n+n_i-2)x^i_{n_i}-\alpha(n+n_j-1)x^i_{n_i}+2(1-\alpha)\sum\limits^{n_i-1}_{t=1}x^i_t\\
&\ \ \ \ +(1-\alpha)\sum\limits^{n_j}_{t=1}x^j_t-2(1-\alpha)\sum\limits^{n_j}_{t=1}x^j_t
-(1-\alpha)\sum\limits^{n_i-1}_{t=1}x^i_t\\
&=\alpha(n_i-1-n_j)x^i_{n_i}+(1-\alpha)(\sum\limits^{n_i-1}_{t=1}x^i_t-\sum\limits^{n_j}_{t=1}x^j_t)\\
&=\alpha(n_i-1-n_j)x^i_{n_i}+(1-\alpha)\sum\limits^{n_i-1}_{t=n_j+1}x^i_t>0
\end{align*}
That is $\mu_\alpha(G)X=D_\alpha(G) X> D_\alpha(G')X$. Thus by Lemma \ref{le:3},
we have $\mu_\alpha(G)>\mu_\alpha(G')$.

We perform the above operation as many times as possible until there is no
$V^i, V^j$ such that $|n_i-n_j|\geq2$, which means the minimum $D_\alpha$
spectral radius of $\mathcal{T}_n^k$ is achieved only at the digraph $\mathcal{T}_n^{k*}$.
\end{proof}

Combining Lemma \ref{le:10} and Theorem \ref{th:c-3}, we have the following theorem.

\noindent\begin{theorem}\label{th:c-4} The digraph $\mathcal{T}_n^{k*}$
is the unique digraph which has the minimum $D_\alpha$ spectral radius among all
digraphs in $\mathcal{C}_n^k$.
\end{theorem}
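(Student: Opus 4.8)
The plan is to reduce the minimization over the whole class $\mathcal{C}_n^k$ to the structured subfamily $\mathcal{T}_n^k$, where Theorem~\ref{th:c-3} already pins down the extremal digraph. First I would take an arbitrary $G\in\mathcal{C}_n^k$ and fix one $k$-coloring of $V(G)$ into acyclic classes $V^1,\ldots,V^k$ with $n_i=|V^i|$, arranged so that $n_1\geq n_2\geq\cdots\geq n_k$. The goal is to add arcs to $G$ one at a time until the result lies in $\mathcal{T}_n^k$, using Corollary~\ref{co:1} to control how $\mu_\alpha$ changes at each step.

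Next I would carry out the completion. Inside each acyclic class, Lemma~\ref{le:10} supplies a topological ordering of $V^i$ in which every arc of $G[V^i]$ points forward; consequently all present arcs are forward arcs, and I may insert every missing forward arc to turn $G[V^i]$ into a transitive tournament without creating any directed cycle. Between two distinct classes $V^i$ and $V^j$ I would add both arcs $(u,v)$ and $(v,u)$ for each $u\in V^i$, $v\in V^j$; since this leaves every induced subdigraph $G[V^m]$ unchanged, each class remains acyclic. Denote the resulting digraph by $G'$. By construction $G'$ has precisely the defining structure of $\mathcal{T}_n^k$, and $G$ is a spanning subdigraph of $G'$. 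Moreover the bidirectional inter-class arcs produce a directed $2$-cycle through every pair of vertices in distinct classes, so no acyclic set can meet two classes and hence $\chi(G')=k$; thus $G'\in\mathcal{T}_n^k\subseteq\mathcal{C}_n^k$.

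The final step is to chain the two inequalities. Applying Corollary~\ref{co:1} once for each arc added in passing from $G$ to $G'$ yields $\mu_\alpha(G)\geq\mu_\alpha(G')$, with equality forcing that no arc was added, i.e. $G=G'$. Theorem~\ref{th:c-3} then gives $\mu_\alpha(G')\geq\mu_\alpha(\mathcal{T}_n^{k*})$, with equality forcing $G'\cong\mathcal{T}_n^{k*}$. Combining the two, $\mu_\alpha(G)\geq\mu_\alpha(\mathcal{T}_n^{k*})$ for every $G\in\mathcal{C}_n^k$, and equality requires both $G=G'$ and $G'\cong\mathcal{T}_n^{k*}$, whence $G\cong\mathcal{T}_n^{k*}$. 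This delivers both minimality and uniqueness.

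The main obstacle, and the only point that needs genuine care, is justifying that the completion really keeps us inside $\mathcal{C}_n^k$: the intra-class additions must not accidentally create a monochromatic directed cycle, and the additions as a whole must not drop the dichromatic number below $k$, since otherwise the comparison would leave the intended family. Both concerns are resolved by the two structural observations above — the topological ordering from Lemma~\ref{le:10} keeps each class acyclic, and the inter-class $2$-cycles force $\chi(G')=k$ exactly — and it is these facts that make the reduction to $\mathcal{T}_n^k$ legitimate. With them in place, the remainder is a direct splice of Corollary~\ref{co:1} and Theorem~\ref{th:c-3}.
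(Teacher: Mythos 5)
Your proof is correct and follows essentially the same route as the paper: reduce to the family $\mathcal{T}_n^k$ by completing each acyclic color class to a transitive tournament (via the topological ordering of Lemma~\ref{le:10}) and adding all bidirected arcs between classes, invoke Corollary~\ref{co:1} for strict monotonicity under arc addition, and then apply Theorem~\ref{th:c-3}. In fact you are more careful than the paper, which states this reduction in one sentence and leaves implicit the verification that the completed digraph still has dichromatic number exactly $k$; your observation about the inter-class $2$-cycles fills that gap cleanly.
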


\section{The minimum $D_\alpha$ spectral radius of strongly connected digraphs with given vertex connectivity}

Let $\mathcal{G}_{n,k}$ denote the set of strongly connected digraphs
with order $n$ and vertex connectivity $\kappa(G)=k\geq1$. If $k=n-1$,
then $\mathcal{G}_{n,k}=\{\overset{\longleftrightarrow}{K_{n}}\}$.
So we only consider the cases $1\leq k \leq n-2$.

For $1\leq m \leq n-k-1$, $K(n,k,m)$ denote the digraph $\overset{\longleftrightarrow}{K_k}\vee(\overset{\longleftrightarrow}{K_n}_{-k-m}\cup
\overset{\longleftrightarrow}{K_m})+ E$, where $E=\{(u,v): u\in V(\overset{\longleftrightarrow}{K_{m}}),
v\in V(\overset{\longleftrightarrow}{K_n}_{-k-m})\}$.
Let $\mathcal{K}(n,k)=\{K(n,k,m): \ 1\leq m \leq n-k-1\}$. Clearly $\mathcal{K}(n,k)\subset\mathcal{G}_{n,k}$.

In \cite{LS}, Lin et al. proved that $K(n,k,n-k-1)$ or $K(n,k,1)$ attains the minimum $D_0$ spectral radius among all digraphs with given vertex connectivity $k$.
The authors of \cite{LWM1} and \cite{XiWa2} independently
determined that $K(n,k,1)$ also attains the minimum $D_{\frac{1}{2}}$ spectral radius among
all strongly connected digraphs with given vertex connectivity $k$. We generalize their results to $0\leq\alpha<1$.


\noindent\begin{lemma}\label{le:11} (\cite{BoMu}) Let $G$ be a strongly
connected digraph with $\kappa(G)=k$. Suppose that $S$ is a $k$-vertex cut
of $G$ and $G_1,G_2,\ldots,G_t$ are the strongly connected components of $G-S$.
Then there exists an ordering of $G_1,G_2,\ldots,G_t$ such that for $1\leq i\leq t$
and any $v\in V(G_i)$, every tail of $v$ is in $\bigcup^i_{j=1}G_{j}$.
\end{lemma}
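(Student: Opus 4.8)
Lemma~\ref{le:11}: for a strongly connected digraph $G$ with $\kappa(G)=k$ and a $k$-vertex cut $S$, the strongly connected components $G_1,\ldots,G_t$ of $G-S$ can be ordered so that every arc entering $V(G_i)$ from outside $S$ originates in $\bigcup_{j=1}^{i}V(G_j)$.

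Let me think about how I would prove this.

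The key object is the **condensation** of $G-S$: contract each strongly connected component $G_i$ to a single vertex, keeping an arc between distinct contracted vertices whenever $G-S$ has an arc between the corresponding components. This condensation is acyclic, because a directed cycle among the contracted vertices would force all the involved components to merge into one strongly connected component.

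So I would apply Lemma~\ref{le:10} (the topological-ordering lemma for acyclic digraphs) to this condensation. That lemma gives an ordering $G_1, G_2, \ldots, G_t$ of the contracted vertices such that every arc of the condensation with head $G_i$ has its tail in $\{G_1,\ldots,G_{i-1}\}$. Unwinding the contraction, this means: if $G-S$ contains an arc from a vertex of $G_a$ to a vertex of $G_i$ with $a\neq i$, then $a<i$, i.e. $a\in\{1,\ldots,i-1\}$.

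Now I need to verify this gives exactly the claimed property. Fix $i$ and any $v\in V(G_i)$, and let $u$ be a tail of $v$ in $G$. There are three cases for where $u$ lives. If $u\in S$, nothing to check. If $u\in V(G_i)$, then $u\in\bigcup_{j=1}^{i}V(G_j)$ trivially. If $u\in V(G_a)$ for some $a\neq i$, then the arc $(u,v)$ lives in $G-S$ and crosses between components, so by the ordering $a<i$, hence $u\in\bigcup_{j=1}^{i}V(G_j)$. In all cases the tail lies in $\bigcup_{j=1}^{i}V(G_j)$, as required.

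The only genuinely substantive point — and the step I'd flag as the "main obstacle," though it is really just a verification — is **acyclicity of the condensation**, which is what lets me invoke Lemma~\ref{le:10}. I would justify it by the standard argument: the strongly connected components of any digraph partition its vertices, and collapsing them yields a digraph with no directed cycles, since a cycle through two or more components would make all of them mutually reachable and thus a single component, contradicting maximality. This is a classical fact (and is exactly the kind of structural result cited to \cite{BoMu}), so in the write-up I would state it and reduce everything to one clean application of Lemma~\ref{le:10}. I expect the author's proof to do essentially this condensation-plus-topological-sort argument.
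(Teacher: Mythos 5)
Your proof is correct: the condensation of $G-S$ is acyclic by maximality of strongly connected components, and applying Lemma~\ref{le:10} to it yields exactly the claimed ordering, with the case analysis on the location of the tail (in $S$, in $G_i$, or in another component) handled properly. The paper itself gives no proof of this lemma --- it is quoted from \cite{BoMu} --- and your condensation-plus-topological-sort argument is precisely the standard one behind that citation, so there is nothing to reconcile.
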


\noindent\begin{remark}\label{re:1}
By Lemma \ref{le:11}, we know that there exists a strongly
connected component of $G-S$, say $G_1$ with $|V(G_1)|=m$ such that
for any $v_i\in V(G_1)$, $|W^-_i|=0$, where $W_i^-=\{v_j\in V(G-S-G_1): (v_j,v_i)\in E(G)\}$.
Let $G_2=G-S-G_1$. We add arcs to $G$ until both induced subdigraph
of $V(G_1)\cup S$ and induced subdigraph of $V(G_2)\cup S$ attain to
complete digraphs, add arc $(u,v)$ for any $u\in V(G_1)$ and any $v\in V(G_2)$, the new resulting digraph denoted
by $H$. Since $G$ is $k$-strongly connected, then
$H=K(n,k,m)\in\mathcal{K}(n,k)\subset\mathcal{G}_{n,k}$. By Corollary \ref{co:1}, we have
$\mu_\alpha(G)\geq\mu_\alpha(H)$, with equality if and only if $G\cong H$. Therefore,
the digraphs which achieve the minimum $D_\alpha$ spectral radius among all digraphs
in $\mathcal{G}_{n,k}$ must be some digraphs in $\mathcal{K}(n,k)$.
\end{remark}

\noindent\begin{theorem}\label{th:c-5} Let $n,k,m$ be positive integers
such that $1\leq k \leq n-2$ and $1\leq m \leq n-k-1$. Then
$$\resizebox{.9\hsize}{!}
{$\mu_\alpha(K(n,k,m))= \frac{\alpha m+\alpha n+n-2+\sqrt{
(1-\alpha)^2n^2+(2\alpha^2-6\alpha+4)mn+(\alpha^2+4\alpha-4)m^2-4(1-\alpha)km}}{2}$}.$$
\end{theorem}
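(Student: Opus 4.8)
The plan is to exploit the strong symmetry of $K(n,k,m)$ to collapse the eigenvalue problem to a $2\times2$ system. Write $S=V(\overset{\longleftrightarrow}{K_k})$, $G_1=V(\overset{\longleftrightarrow}{K_m})$ and $G_2=V(\overset{\longleftrightarrow}{K}_{n-k-m})$. First I would record the pairwise distances: every two vertices lie at distance $1$, except that a vertex of $G_2$ can reach a vertex of $G_1$ only through $S$ (the arcs of $E$ go one way, from $G_1$ to $G_2$), so $d(u,v)=2$ for $u\in G_2$, $v\in G_1$. Hence the transmissions are $D_u=n-1$ for $u\in S\cup G_1$ and $D_u=n+m-1$ for $u\in G_2$. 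The partition $\{S,G_1,G_2\}$ is equitable for $D_\alpha(K(n,k,m))$ and each block is an orbit of the automorphism group, so by uniqueness of the Perron vector (Perron--Frobenius, as recalled in the introduction) the Perron vector is constant on each block; call its values $x,y,z$.

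From $D_\alpha X=\mu X$ with $\mu=\mu_\alpha(K(n,k,m))$ one then obtains the three scalar equations
\begin{align*}
\mu x &= \alpha(n-1)x+(1-\alpha)\big[(k-1)x+my+(n-k-m)z\big],\\
\mu y &= \alpha(n-1)y+(1-\alpha)\big[kx+(m-1)y+(n-k-m)z\big],\\
\mu z &= \alpha(n+m-1)z+(1-\alpha)\big[kx+2my+(n-k-m-1)z\big].
\end{align*}
The crucial step is to subtract the second equation from the first: the $z$-contributions cancel and the remaining terms collapse to
$$(x-y)\big[\mu-\alpha(n-1)+(1-\alpha)\big]=0.$$
Since Corollary \ref{co:c3} gives $\mu_\alpha>\alpha Tr_{\max}=\alpha(n+m-1)>\alpha(n-1)-(1-\alpha)$, the bracket cannot vanish, forcing $x=y$. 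This is the key observation: although $S$ and $G_1$ are not interchangeable as vertex sets, the Perron vector is forced to agree on them.

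Setting $y=x$ reduces the system to the $2\times2$ matrix
$$\tilde B=\begin{pmatrix}\alpha(n-1)+(1-\alpha)(k+m-1) & (1-\alpha)(n-k-m)\\[2pt] (1-\alpha)(k+2m) & \alpha(n+m-1)+(1-\alpha)(n-k-m-1)\end{pmatrix},$$
whose Perron root is $\mu_\alpha(K(n,k,m))$: it is the larger eigenvalue, because $(x,z)$ is positive and $\tilde B$ is a positive matrix (here $n-k-m\ge1$). Writing $\tilde B=\left(\begin{smallmatrix}a&b\\ c&d\end{smallmatrix}\right)$, the larger eigenvalue equals $\tfrac12\big((a+d)+\sqrt{(a-d)^2+4bc}\big)$, and I expect the trace to simplify cleanly to $a+d=\alpha m+\alpha n+n-2$, which is exactly the linear part of the claimed formula.

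The main obstacle --- indeed the only nontrivial labor --- is verifying that the discriminant $(a-d)^2+4bc$ equals $(1-\alpha)^2n^2+(2\alpha^2-6\alpha+4)mn+(\alpha^2+4\alpha-4)m^2-4(1-\alpha)km$. This is a careful but routine polynomial expansion: starting from $a-d=-\alpha m+(1-\alpha)(2k+2m-n)$ and $4bc=4(1-\alpha)^2(n-k-m)(k+2m)$, I would collect the coefficients monomial by monomial in $n^2,\,mn,\,m^2,\,km$ and substitute $1-\alpha$, checking each of the four coefficients against the target. Once the match is confirmed, the quadratic formula with the $+$ sign yields precisely the stated expression for $\mu_\alpha(K(n,k,m))$.
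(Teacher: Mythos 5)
Your proposal is correct and follows essentially the same route as the paper: both collapse the Perron vector to constant values on the blocks, reduce $D_\alpha(K(n,k,m))$ to the same $2\times2$ matrix (your $\tilde B$ equals the paper's matrix entrywise after rewriting the diagonal), and extract $\mu_\alpha$ as the larger root of the quadratic, whose trace and discriminant I have checked do simplify to the stated expression. The only difference is that you rigorously derive the equality $x=y$ on $S\cup V(G_1)$ by the subtraction argument (and constancy on orbits via automorphisms), whereas the paper simply asserts this step as ``easy to know''---a welcome extra bit of care, not a different method.
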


\begin{proof} Let $G=K(n,k,m)$, and $S$ be a $k$-vertex cut of
$G$. Suppose that $G_1$ with $|V(G_1)|=m$ and $G_2$ with $|V(G_2)|=n-k-m$
are two strongly connected components, i.e., two complete subdigraphs of $G-S$
with arcs $\{(u,v): u\in V(G_1), v\in V(G_2)\}$. Let $X$ be the Perron vector of $D_\alpha(G)$. It is easy to know
that all coordinates of the Perron vector of $D_\alpha(G)$ corresponding to vertices $V(G_1)\cup S$ are equal, say $x_1$,
all coordinates corresponding to vertices $V(G_2)$ are equal, say $x_2$.
Therefore, we get
$$
\begin{cases}
\ \mu_\alpha(G) x_1=\alpha(n-1)x_1+(1-\alpha)(k+m-1)x_1+(1-\alpha)(n-k-m)x_2,\\
\ \mu_\alpha(G) x_2=\alpha(n+m-1)x_2+(1-\alpha)kx_1+2(1-\alpha)mx_1+(1-\alpha)(n-k-m-1)x_2.
\end{cases}$$
Or equivalently
$$\left(
  \begin{array}{ccc}
  \alpha(n-k-m)+k+m-1 & (1-\alpha)(n-k-m) \\
   (1-\alpha)(k+2m) & \alpha(k+2m)+n-k-m-1
  \end{array}
\right)
\left(
          \begin{array}{c}
            x_1 \\
            x_2 \\
          \end{array}
        \right)=\mu_\alpha(G) \left(
          \begin{array}{c}
            x_1 \\
            x_2 \\
          \end{array}
        \right)$$
Let $f(x)=x^2-(\alpha n+\alpha m+n-2)x+1-n-mn-\alpha km+2\alpha nm-\alpha n+\alpha n^2-\alpha m-\alpha m^2+km+m^2$.
It is easy to know that $\mu_\alpha(G)$ is the largest real root of the equation
$f(x)=0$, where $1\leq m\leq n-k-1$. Since the above $2\times2$ matrix is nonnegative irreducible, $\mu_\alpha(G)$ is
an eigenvalue of the above $2\times2$ matrix with multiplicity 1. Then the discriminant of $f(x)$ is greater than 0.
Therefore, we have
$$\resizebox{.9\hsize}{!}
{$\mu_\alpha(K(n,k,m))= \frac{\alpha m+\alpha n+n-2+\sqrt{
(1-\alpha)^2n^2+(2\alpha^2-6\alpha+4)mn+(\alpha^2+4\alpha-4)m^2-4(1-\alpha)km}}{2}$}.$$
\end{proof}

\noindent\begin{remark} Note that $\overset{\longleftrightarrow}{K_{n}}$ is the unique
digraph which achieves the minimum $D_\alpha(G)$ spectral radius $n-1$ among all strongly connected
digraphs, and $K(n,n-2,1)=\overset{\longleftrightarrow}{K_{n}}-\{(u,v)\}$
where $u,v\in V(\overset{\longleftrightarrow}{K_{n}})$, by Corollary \ref{co:1}
and Theorem \ref{th:c-5}, we deduce that $K(n,n-2,1)$ is the unique
digraph which achieves the second minimum $D_\alpha(G)$ spectral radius
$$\frac{n+\alpha n+\alpha-2+\sqrt{
(1-\alpha)^2n^2-2\alpha(1-\alpha)n+\alpha^2-4\alpha+4}}{2}$$ among all
strongly connected digraphs of order $n$.
\end{remark}

\noindent\begin{theorem}\label{th:c-6} Let $n,k$ be positive integers
such that $1\leq k \leq n-2$, $G\in\mathcal{G}_{n,k}$. Then we have

$(i)$ For $\alpha=0$, $\mu_\alpha(G)\geq\frac{n-2+\sqrt{n^2+4n-4k-4}}{2}$ with equality if and only if
$G\cong K(n,k,n-k-1)$ or $G\cong K(n,k,1)$.

$(ii)$ For $0<\alpha\leq\frac{4}{5}$, $\mu_\alpha(G)\geq\frac{n-2+\alpha+\alpha n+\sqrt{
(1-\alpha)^2n^2+(2\alpha^2-6\alpha+4)n+(\alpha^2+4\alpha-4)-4(1-\alpha)k}}{2},$
with equality if and only if $G\cong K(n,k,1)$.
\end{theorem}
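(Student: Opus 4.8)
The plan is to reduce the minimization over all of $\mathcal{G}_{n,k}$ to the one‑parameter family $\mathcal{K}(n,k)$ and then to exploit the explicit root formula. By Remark \ref{re:1}, every minimizer of $\mu_\alpha$ over $\mathcal{G}_{n,k}$ is isomorphic to some $K(n,k,m)$, so it suffices to minimize $g(m):=\mu_\alpha(K(n,k,m))$ over the integers $1\le m\le n-k-1$. From the proof of Theorem \ref{th:c-5}, $g(m)$ is the larger root of $f_m(x)=x^2-P(m)x+c(m)$, where $P(m)=(1+\alpha)n+\alpha m-2$ and $c(m)=(1-\alpha)m^2+\big[(2\alpha-1)n+(1-\alpha)k-\alpha\big]m+\big(1-n-\alpha n+\alpha n^2\big)$; here $c(m)$ is quadratic in $m$ with leading coefficient $1-\alpha>0$.

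For part $(i)$ I set $\alpha=0$, where the formula collapses to $g(m)=\tfrac12\big(n-2+\sqrt{n^2+4m(n-k-m)}\big)$. Since the square root is increasing, minimizing $g$ is the same as minimizing the strictly concave quadratic $m\mapsto m(n-k-m)$ on $\{1,\dots,n-k-1\}$. This quadratic is symmetric about $m=\tfrac{n-k}{2}$, so over the integer range its minimum is attained exactly at the two endpoints $m=1$ and $m=n-k-1$, both giving $m(n-k-m)=n-k-1$, while every interior $m$ yields a strictly larger value. Hence the minimum of $g$ is attained precisely at $K(n,k,1)$ and $K(n,k,n-k-1)$, with common value $\tfrac12\big(n-2+\sqrt{n^2+4n-4k-4}\big)$, as claimed.

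For part $(ii)$ the goal is to show $g(1)<g(m)$ for every $2\le m\le n-k-1$. The key step is to evaluate $f_m$ at the point $g(1)$: since $f_m$ is an upward parabola whose larger root is $g(m)$, the inequality $f_m(g(1))<0$ already forces $g(1)<g(m)$ (it places $g(1)$ strictly between the two roots of $f_m$). Using $f_1(g(1))=0$ to substitute $g(1)^2=P(1)g(1)-c(1)$ and simplifying, I expect
$$f_m(g(1))=(m-1)\Big[-\alpha\,g(1)+(1-\alpha)(m+1)+(2\alpha-1)n+(1-\alpha)k-\alpha\Big].$$
The bracket is linear and increasing in $m$ (slope $1-\alpha>0$), so its maximum over the range is at $m=n-k-1$, where it telescopes to $\alpha\big(n-1-g(1)\big)$. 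Because $K(n,k,1)\not\cong\overset{\longleftrightarrow}{K_{n}}$ for $k\le n-2$, Theorem \ref{th:2} gives $g(1)>n-1$, so the bracket is strictly negative for all $m\le n-k-1$; together with $m-1>0$ this yields $f_m(g(1))<0$ and hence $g(1)<g(m)$ for every $m\ge2$. Thus $K(n,k,1)$ is the unique minimizer, with value obtained by setting $m=1$ in Theorem \ref{th:c-5}.

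The heart of the argument is the clean cancellation that reduces the endpoint bracket to $\alpha(n-1-g(1))$, together with the strict bound $g(1)>n-1$; once these are in hand both parts follow, and the transition from two minimizers at $\alpha=0$ to a single minimizer for $\alpha>0$ is exactly the strictness that appears as soon as $\alpha>0$. I note that this comparison argument uses no upper bound on $\alpha$ and in fact delivers the conclusion of $(ii)$ for all $0<\alpha<1$; the hypothesis $\alpha\le\tfrac45$ appears to be an artifact of a monotonicity‑based route, in which one controls $g'(m)$ through the transmission bound $g(m)>\alpha(n+m-1)$ of Corollary \ref{co:c3} and needs the leading coefficient $\alpha^2+4\alpha-4$ of the discriminant to remain negative (i.e. $\alpha<2\sqrt2-2$, of which $\alpha\le\tfrac45$ is a safe sub‑interval). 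The main obstacle is therefore purely the bookkeeping of the quadratic in $m$; I do not anticipate a genuine structural difficulty beyond it.
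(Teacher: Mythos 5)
Your proof is correct, and for part (ii) it takes a genuinely different --- and stronger --- route than the paper's. Both arguments begin identically: Remark \ref{re:1} reduces the minimization to the family $K(n,k,m)$, and Theorem \ref{th:c-5} supplies $g(m)=\mu_\alpha(K(n,k,m))$ as the largest root of the quadratic $f_m$. From there the paper treats $m$ as a continuous variable: it computes $g''(m)$ and shows $g''<0$ whenever $\alpha<\frac{2n^2-2nk+k^2}{2n^2-nk}$ (a threshold exceeding $\tfrac{4}{5}$, which is exactly where the hypothesis $\alpha\le\tfrac{4}{5}$ comes from), concluding that the minimum sits at an endpoint; it then compares the two endpoints by showing $g(n-k-1)-g(1)=0$ at $\alpha=0$ and $g(n-k-1)-g(1)>0$ for $\alpha>0$ via a contradiction argument with the two surds $\sqrt{A},\sqrt{B}$. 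You instead compare $g(1)$ with each $g(m)$ purely algebraically: using $f_1(g(1))=0$ you obtain the factorization $f_m(g(1))=(m-1)\bigl[-\alpha g(1)+(1-\alpha)(m+1)+(2\alpha-1)n+(1-\alpha)k-\alpha\bigr]$, note that the bracket increases in $m$ and telescopes at $m=n-k-1$ to $\alpha\bigl(n-1-g(1)\bigr)$, which is strictly negative by Theorem \ref{th:2} since $K(n,k,1)\ncong\overset{\longleftrightarrow}{K_{n}}$ for $k\le n-2$; hence $f_m(g(1))<0$, placing $g(1)$ strictly below the larger root $g(m)$. I checked the reorganization of the paper's constant term into $c(m)$, the factorization, and the telescoping; all are correct, as is your $\alpha=0$ argument via the radicand $n^2+4m(n-k-m)$. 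What your route buys is substantial: no calculus, no endpoint case analysis, and no restriction on $\alpha$, so it establishes part (ii) for all $0<\alpha<1$ and all $1\le k\le n-2$ --- thereby subsuming Theorem \ref{th:c-7} and in fact settling the Conjecture stated at the end of Section 4, which the paper leaves open. One minor quibble: your closing speculation about why the paper needs $\alpha\le\tfrac{4}{5}$ is off in its details --- the restriction arises from the sign of $g''$, i.e.\ of $2n^2(\alpha-1)-k^2+nk(2-\alpha)$, not from keeping $\alpha^2+4\alpha-4$ negative --- but this is commentary and does not affect the validity of your proof.
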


\begin{proof} By Remark \ref{re:1}, $\mu_\alpha(G)\geq \mu_\alpha(K(n,k,m))$ for some $m$, where $1\leq m \leq n-k-1$. By Theorem \ref{th:c-5}, we have
$$\resizebox{.9\hsize}{!}
{$\mu_\alpha(K(n,k,m))= \frac{\alpha m+\alpha n+n-2+\sqrt{
(1-\alpha)^2n^2+(2\alpha^2-6\alpha+4)mn+(\alpha^2+4\alpha-4)m^2-4(1-\alpha)km}}{2}$}.$$
 Now we want to show that the minimum value of $\mu_\alpha(K(n,k,m))$ must be
taken at either $m=1$ or at $m=n-k-1$.

Let $\resizebox{.9\hsize}{!}
{$g(m)=n-2+\alpha m+\alpha n+\sqrt{
(1-\alpha)^2n^2+(2\alpha^2-6\alpha+4)mn+(\alpha^2+4\alpha-4)m^2-4(1-\alpha)km}$}$.
\\Then
$$g(m)'=\alpha+\frac{1}{2}\frac{(2\alpha^2-6\alpha+4)n+2(\alpha^2+4\alpha-4)m-4(1-\alpha)k}{\sqrt{
(1-\alpha)^2n^2+(2\alpha^2-6\alpha+4)mn+(\alpha^2+4\alpha-4)m^2-4(1-\alpha)km}},$$
\begin{align*}
g(m)''&=\frac{1}{4}\frac{16k^2(2\alpha-\alpha^2-1)+16nk(-5\alpha+4\alpha^2+2-\alpha^3)+32n^2(3\alpha-3\alpha^2+\alpha^3-1)}{(
(1-\alpha)^2n^2+(2\alpha^2-6\alpha+4)mn+(\alpha^2+4\alpha-4)m^2-4(1-\alpha)km)^{\frac{3}{2}}}\\
&=\frac{1}{4}\frac{-16k^2(\alpha-1)^2+16nk(\alpha-1)^2(2-\alpha)+32n^2(\alpha-1)^3}{(
(1-\alpha)^2n^2+(2\alpha^2-6\alpha+4)mn+(\alpha^2+4\alpha-4)m^2-4(1-\alpha)km)^{\frac{3}{2}}}\\
&=\frac{1}{4}\frac{16(\alpha-1)^2(2n^2(\alpha-1)-k^2+nk(2-\alpha))}{(
(1-\alpha)^2n^2+(2\alpha^2-6\alpha+4)mn+(\alpha^2+4\alpha-4)m^2-4(1-\alpha)km)^{\frac{3}{2}}}.
\end{align*}
Take $f(\alpha)=2n^2(\alpha-1)-k^2+nk(2-\alpha)=(2n^2-nk)\alpha-2n^2+2nk-k^2$. Then $f(\alpha)<0$ for all
$\alpha<\frac{2n^2-2nk+k^2}{2n^2-nk}$. Since $\frac{2n^2-2nk+k^2}{2n^2-nk}>\frac{4}{5}$, $f(\alpha)<0$ for all $0\leq\alpha\leq\frac{4}{5}$.
Hence, $g(m)''<0$ for all $0\leq\alpha\leq\frac{4}{5}$. Thus, for fixed $n$ and $k$, the minimum value of $g(m)$ must be
taken at either $m=1$ or at $m=n-k-1$.

In the following, we want to compare $g(1)$ and $g(n-k-1)$.
Let $B=(\alpha^2+1-2\alpha)n^2+(2\alpha^2-6\alpha+4)n-4+4\alpha+\alpha ^2+4\alpha k-4k$
and $A=(4\alpha^2-4\alpha+1)n^2+(2\alpha k-4\alpha^2k-2\alpha-4\alpha^2+4)n-4+4\alpha+\alpha ^2+4\alpha k+\alpha^2k^2+2\alpha^2k-4k$. Then
\begin{align*}
g(n-k-1)-g(1)&=\alpha n-\alpha k-2\alpha+\sqrt{A}-\sqrt{B}\\
&=\alpha(n-k-2)+\frac{A-B}{\sqrt{A}+\sqrt{B}}\\
&=\alpha(n-k-2)+\frac{\alpha(n-k-2)(-2n+3\alpha n-\alpha k)}{\sqrt{A}+\sqrt{B}}\\
&=\alpha(n-k-2)(1+\frac{-2n+3\alpha n-\alpha k}{\sqrt{A}+\sqrt{B}}).
\end{align*}

For $\alpha=0$, we have $g(n-k-1)-g(1)=0$, that is $\mu_\alpha(K(n,k,n-k-1))=\mu_\alpha(K(n,k,1))=\frac{n-2+\sqrt{n^2+4n-4k-4}}{2}$.
 Therefore, $\mu_\alpha(G)\geq\frac{n-2+\sqrt{n^2-4n+4k+4}}{2}$ with equality if and only if
$G\cong K(n,k,n-k-1)$ or $G\cong K(n,k,1)$.

For $0<\alpha<1$. We assume that $n>k+2$ since in case $n=k+2$ there is only one value of
$m$ under consideration. Now suppose that $g(n-k-1)-g(1)\leq0$. We will deduce a contradiction. We have simultaneously
$$\sqrt{A}+\sqrt{B}\leq 2n-3\alpha n+\alpha k\ \textrm{ and}  \ \sqrt{A}-\sqrt{B}\leq2\alpha+\alpha k-\alpha n.$$
So $\sqrt{A}\leq n+\alpha k-2\alpha n+\alpha$.
However, $A-(n+\alpha k-2\alpha n+\alpha)^2=-4\alpha n+4n-4+4\alpha k+4\alpha-4k=4(1-\alpha)(n-k-1)>0$, that is $A>(n+\alpha k-2\alpha n+\alpha)^2$.
Thus $g(n-k-1)-g(1)>0$. Then $\mu_\alpha(K(n,k,n-k-1))>\mu_\alpha(K(n,k,1))$.

Therefore, for $0<\alpha\leq\frac{4}{5}$, $\mu_\alpha(G)\geq\frac{n-2+\alpha+\alpha n+\sqrt{
(1-\alpha)^2n^2+(2\alpha^2-6\alpha+4)n+(\alpha^2+4\alpha-4)-4(1-\alpha)k}}{2}$, with equality if and only if $G\cong K(n,k,1)$.

Hence, we get the desired result.
\end{proof}

\noindent\begin{theorem}\label{th:c-7} Let $n,k$ be positive integers
such that $n\geq2k+2$, $G\in\mathcal{G}_{n,k}$. Then we have

$(i)$ For $\alpha=0$, $\mu_\alpha(G)\geq\frac{n-2+\sqrt{n^2+4n-4k-4}}{2}$ with equality if and only if
$G\cong K(n,k,n-k-1)$ or $G\cong K(n,k,1)$.

$(ii)$ For $0<\alpha<1$, $\mu_\alpha(G)\geq\frac{n-2+\alpha+\alpha n+\sqrt{
(1-\alpha)^2n^2+(2\alpha^2-6\alpha+4)n+(\alpha^2+4\alpha-4)-4(1-\alpha)k}}{2},$
with equality if and only if $G\cong K(n,k,1)$.
\end{theorem}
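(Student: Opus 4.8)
The plan is to follow the reduction already set up for Theorem \ref{th:c-6} and only supply the new ingredient needed to enlarge the range of $\alpha$. By Remark \ref{re:1} and Theorem \ref{th:c-5}, every $G\in\mathcal{G}_{n,k}$ satisfies $\mu_\alpha(G)\ge\mu_\alpha(K(n,k,m))=\tfrac12 g(m)$ for some $m$ with $1\le m\le n-k-1$, with equality iff $G\cong K(n,k,m)$, where $g(m)=n-2+\alpha m+\alpha n+\sqrt{S(m)}$ and $S(m)=(1-\alpha)^2n^2+(2\alpha^2-6\alpha+4)mn+(\alpha^2+4\alpha-4)m^2-4(1-\alpha)km$ is the function studied in the proof of Theorem \ref{th:c-6}. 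So it suffices to show that on $[1,n-k-1]$ the function $g$ attains its minimum uniquely at $m=1$ when $0<\alpha<1$, and at both $m=1$ and $m=n-k-1$ when $\alpha=0$; the stated bound is then $\tfrac12 g(1)$, and the equality descriptions follow. I would recall that the numerator of $g''(m)$ equals $16(\alpha-1)^2 f(\alpha)$ with $f(\alpha)=2n^2(\alpha-1)-k^2+nk(2-\alpha)$, which does not depend on $m$; hence $g$ is globally concave, affine, or convex according to the sign of $f(\alpha)$, and I will split the argument accordingly.

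First, the concave/affine case $f(\alpha)\le0$. Here $g$ attains its minimum over $[1,n-k-1]$ at an endpoint, so it is enough to compare $g(1)$ with $g(n-k-1)$. The computation in Theorem \ref{th:c-6} gives $g(n-k-1)-g(1)=\alpha(n-k-2)\bigl(1+\tfrac{-2n+3\alpha n-\alpha k}{\sqrt A+\sqrt B}\bigr)$, and the contradiction argument there, which uses only $n>k+1$ and not $\alpha\le\tfrac45$, shows this quantity is $>0$ for $0<\alpha<1$ and $=0$ for $\alpha=0$. This settles part $(i)$ (both endpoints are minimizers when $\alpha=0$, since $f(0)=-2n^2+2nk-k^2<0$) together with all concave instances of part $(ii)$ (unique minimizer $m=1$).

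The heart of the matter is the convex case $f(\alpha)>0$, where $g$ is convex and the minimum could a priori be interior; this is exactly where $n\ge2k+2$ enters. First I would note that $f(\alpha)>0$ forces $\alpha>\tfrac{2n^2-2nk+k^2}{2n^2-nk}$, and that under $n\ge2k+2$ this threshold is at least its value at $n=2k+2$, namely $\tfrac{5k^2+12k+8}{6k^2+14k+8}>\tfrac56$. Consequently $\alpha>\tfrac56>2\sqrt2-2$, so the coefficient $\alpha^2+4\alpha-4$ is positive. Writing $g'(m)=\alpha+\tfrac{N(m)}{2\sqrt{S(m)}}$ with $N(m)=(2\alpha^2-6\alpha+4)n+2(\alpha^2+4\alpha-4)m-4(1-\alpha)k$, positivity of $\alpha^2+4\alpha-4$ makes $N$ nondecreasing in $m$, so it is enough to show $N(1)\ge0$. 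Since the coefficient $2\alpha^2-6\alpha+4=2(1-\alpha)(2-\alpha)$ of $n$ is positive, $N(1)$ is increasing in $n$, and a direct evaluation at $n=2k+2$ yields $N(1)=4(1-\alpha)^2k+2\alpha(3\alpha-2)\ge0$ because $\alpha>\tfrac23$. Hence $N(1)\ge0$, so $N(m)\ge0$ and $g'(m)\ge\alpha>0$ on $[1,n-k-1]$, whence $g$ is strictly increasing and $m=1$ is the unique minimizer.

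Combining the two cases, $g$ is minimized on $[1,n-k-1]$ uniquely at $m=1$ for every $0<\alpha<1$ under $n\ge2k+2$, and at $m=1$ and $m=n-k-1$ for $\alpha=0$. Feeding this back through Remark \ref{re:1} gives $\mu_\alpha(G)\ge\tfrac12 g(1)=\mu_\alpha(K(n,k,1))$, with equality iff $G\cong K(n,k,1)$ for $0<\alpha<1$, and with the two extremal digraphs for $\alpha=0$. The main obstacle is the convex case: unlike in Theorem \ref{th:c-6}, concavity is unavailable for $\alpha$ near $1$, so one cannot reduce to the endpoints and the whole burden shifts to the sign estimate $N(1)\ge0$. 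The nontrivial point is that $n\ge2k+2$ is precisely what keeps $N(1)$ nonnegative once $f(\alpha)>0$ (equivalently $\alpha>\tfrac56$); verifying the clean identity $N(1)\big|_{n=2k+2}=4(1-\alpha)^2k+2\alpha(3\alpha-2)$ and the threshold inequality $\tfrac{5k^2+12k+8}{6k^2+14k+8}>\tfrac56$ is the crux of the argument.
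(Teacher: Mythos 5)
Your proof is correct, and in the crucial regime it takes a genuinely different route from the paper's. Both arguments share the same reduction (Remark \ref{re:1}, Theorem \ref{th:c-5}, and Theorem \ref{th:c-6} for $0\le\alpha\le\frac45$), but for $\frac45<\alpha<1$ the paper abandons the calculus of $g$ altogether: it compares each $\mu_\alpha(K(n,k,m))$ against the test value $n$, showing via the characteristic polynomial of the $2\times2$ quotient matrix that $\mu_\alpha(K(n,k,1))<n$, while $h(n)<0$ and hence $\mu_\alpha(K(n,k,m))>n$ for all $2\le m\le n-k-1$ (the latter by convexity in $m$ of $p(m)=h(n)$ and checking $p(2)<0$, $p(n-k-1)<0$ in the subcases $2k+2\le n<2k+6$ and $n\ge2k+6$). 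You instead split on the sign of $g''$, which is legitimate because that sign equals the sign of $f(\alpha)$ and is independent of $m$: in the concave/affine case you reuse the endpoint comparison from Theorem \ref{th:c-6}, correctly observing that its contradiction argument (culminating in $A-(n+\alpha k-2\alpha n+\alpha)^2=4(1-\alpha)(n-k-1)>0$) nowhere uses $\alpha\le\frac45$; in the convex case you prove $g'(m)\ge\alpha>0$, so $m=1$ is the unique minimizer. I verified your computations: $N(1)\big|_{n=2k+2}=4(1-\alpha)^2k+2\alpha(3\alpha-2)$ is correct, $\frac{5k^2+12k+8}{6k^2+14k+8}>\frac56$ holds, and $\frac56>2\sqrt{2}-2$, so $\alpha^2+4\alpha-4>0$ and $3\alpha-2>0$ do follow. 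Your version makes transparent exactly where $n\ge2k+2$ enters (it forces $\alpha>\frac56$ once $f(\alpha)>0$) and avoids the paper's case split on $n$; the paper's test-value trick, in exchange, requires no monotonicity analysis of $g$. One assertion you left unproved: that for $n\ge 2k+2$ the threshold $\frac{2n^2-2nk+k^2}{2n^2-nk}$ is at least its value at $n=2k+2$. This is true, since the threshold is increasing in $n$ (its $n$-derivative has the sign of $k\,(2(n-k)^2-k^2)>0$ in this range), but it needs a line of justification; simpler still, $\frac{2n^2-2nk+k^2}{2n^2-nk}\ge\frac56$ is equivalent to $(2n-3k)(n-2k)\ge0$, which is immediate from $n\ge2k+2$, so you can bypass both the monotonicity claim and the evaluation at $n=2k+2$ entirely.
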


\begin{proof} If $0\leq\alpha\leq\frac{4}{5}$, then by Theorem \ref{th:c-6}, we get the desired result. Therefore, we only consider the case
$\frac{4}{5}<\alpha<1$ in the following. By Remark \ref{re:1}, $\mu_\alpha(G)\geq \mu_\alpha K(n,k,m)$ for some $m$, where $1\leq m \leq n-k-1$.
By Theorem \ref{th:c-5}, we have $\mu_\alpha(K(n,k,1))$ is the largest real root of the equation
$x^2-(\alpha n+\alpha +n-2)x+2-2n-\alpha k+\alpha n+\alpha n^2-2\alpha+k=0$.
Let $f(x)=x^2-(\alpha n+\alpha +n-2)x+2-2n-\alpha k+\alpha n+\alpha n^2-2\alpha+k$ with axis of symmetry
$\widetilde{x}=\frac{\alpha n+\alpha +n-2}{2}<n.$ Then $f(x)\geq f(n)=2-\alpha k+k-2\alpha>0$ for all $x\geq n$.
Hence $\mu_\alpha(K(n,k,1))<n$. In the following, we want to prove $\mu_\alpha(K(n,k,m))>n$ for $2\leq m\leq n-k-1$.

For $2\leq m\leq n-k-1$, $\mu_\alpha(K(n,k,m))$ is the largest root of the equation
$x^2-(\alpha n+\alpha m+n-2)x+1-n-mn-\alpha km+2\alpha nm-\alpha n+\alpha n^2-\alpha m-\alpha m^2+km+m^2=0$.
Let $h(x)=x^2-(\alpha n+\alpha m+n-2)x+1-n-mn-\alpha km+2\alpha nm-\alpha n+\alpha n^2-\alpha m-\alpha m^2+km+m^2$.
Then $h(n)=(1-\alpha)m^2+(\alpha n-\alpha-\alpha k+k-n)m+n+1-an$. Take $p(m)=(1-\alpha)m^2+(\alpha n-\alpha-\alpha k+k-n)m+n+1-an$.
Since $p''(m)>0$, $p(m)\leq \max\{p(2),p(n-k-1)\}=\max\{(n-2k-6)\alpha+5-n+2k,2-\alpha n+k\}$. We discuss the following two cases.

{\bf Case 1.} If $2k+2\leq n<2k+6$, then $n-2k-6<0$, $(n-2k-6)\alpha+5-n+2k<(n-2k-6)\frac{4}{5}+5-n+2k=-\frac{1}{5}(n-2k-1)<0$,
$2-\alpha n+k<2+k-\frac{4}{5}n\leq-\frac{1}{5}(3k-2)<0$. Thus $p(m)\leq \max\{p(2),p(n-k-1)\}<0$.
Hence $h(n)<0$. Therefore, $\mu_\alpha(K(n,k,m))>n$ for $2\leq m\leq n-k-1$.

{\bf Case 2.} If $n\geq2k+6$. Then $n-2k-6\geq0$, $(n-2k-6)\alpha+5-n+2k\leq(n-2k-6)+5-n+2k=-1<0$,
$2-\alpha n+k<2+k-\frac{4}{5}n\leq-\frac{1}{5}(3k+14)<0$. Hence $h(n)<0$. Therefore, $\mu_\alpha(K(n,k,m))>n$ for $2\leq m\leq n-k-1$.

Combining the above two cases, we have $\mu_\alpha(K(n,k,m))>n>\mu_\alpha(K(n,k,1))$ for $2\leq m\leq n-k-1$ and $\frac{4}{5}<\alpha<1$.
Therefore, for $\frac{4}{5}<\alpha<1$
$\mu_\alpha(G)\geq K(n,k,1)$ with equality if and only if $G\cong K(n,k,1)$.

Hence, we get the desired result.
\end{proof}

For general case, we propose the following conjecture based on numerical examples.

\noindent\begin{conjecture} \ Let $n,k$ be positive integers, $0<\alpha<1$, $G\in\mathcal{G}_{n,k}$. Then
 $\mu_\alpha(G)\geq\mu_\alpha(K(n,k,1))$,
with equality if and only if $G\cong K(n,k,1)$.
\end{conjecture}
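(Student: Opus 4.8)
The plan is to bypass the analytic optimization of $g(m)$ used for Theorems~\ref{th:c-6} and~\ref{th:c-7} and instead compare the relevant eigenvalues directly through their characteristic quadratics. First I would invoke Remark~\ref{re:1}: every $G\in\mathcal{G}_{n,k}$ satisfies $\mu_\alpha(G)\ge\mu_\alpha(K(n,k,m))$ for some $m$ with $1\le m\le n-k-1$, with equality if and only if $G\cong K(n,k,m)$. Hence the global minimizer lies in $\mathcal{K}(n,k)$, and it suffices to prove $\mu_\alpha(K(n,k,1))<\mu_\alpha(K(n,k,m))$ for every $2\le m\le n-k-1$; the equality clause of Remark~\ref{re:1} then upgrades this to $G\cong K(n,k,1)$.

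Write $\mu_m:=\mu_\alpha(K(n,k,m))$. By Theorem~\ref{th:c-5}, $\mu_m$ is the larger root of the upward-opening quadratic $f_m(x)=x^2-(\alpha n+\alpha m+n-2)x+c_m$, where $c_m$ is the constant term displayed there. The key step is a root-transfer identity: since $f_m(x)-f_1(x)=-\alpha(m-1)x+(c_m-c_1)$, evaluating at the root $\mu_1$ of $f_1$ yields $f_m(\mu_1)=(c_m-c_1)-\alpha(m-1)\mu_1$. Because $f_m$ opens upward and $\mu_m$ is its largest root, the inequality $f_m(\mu_1)<0$ places $\mu_1$ strictly between the two roots of $f_m$ and therefore gives $\mu_1<\mu_m$. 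Thus the whole statement reduces to the single inequality $\mu_1>\frac{c_m-c_1}{\alpha(m-1)}$ for $2\le m\le n-k-1$.

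A direct computation of the constant term gives $c_m-c_1=(m-1)\big[(2\alpha-1)n+(1-\alpha)k-\alpha+(1-\alpha)(m+1)\big]$, so the threshold is $\frac{c_m-c_1}{\alpha(m-1)}=\frac{1}{\alpha}\big[(2\alpha-1)n+(1-\alpha)k-\alpha+(1-\alpha)(m+1)\big]$, which is increasing in $m$ since the coefficient of $m$ equals $(1-\alpha)/\alpha>0$. At the largest admissible index $m=n-k-1$ the bracket telescopes to exactly $\alpha(n-1)$, so the threshold equals $n-1$ there and is strictly smaller for every $m<n-k-1$. It therefore suffices to verify the single uniform bound $\mu_1>n-1$, and this is immediate from Theorem~\ref{th:2}: $\mu_\alpha\ge n-1$ with equality only for $\overset{\longleftrightarrow}{K_n}$, while $K(n,k,1)\not\cong\overset{\longleftrightarrow}{K_n}$ because $k\le n-2$. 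Hence $f_m(\mu_1)<0$ for all $2\le m\le n-k-1$, giving $\mu_1<\mu_m$ strictly; combined with Remark~\ref{re:1} this yields $\mu_\alpha(G)\ge\mu_\alpha(K(n,k,1))$ with equality if and only if $G\cong K(n,k,1)$, for every $0<\alpha<1$.

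The crux, and the reason this route succeeds where the others stall, is the telescoping above: the worst case $m=n-k-1$ produces precisely the universal floor $n-1$ of Theorem~\ref{th:2}, so no $\alpha$-dependent case distinction survives. The analytic method behind Theorems~\ref{th:c-6} and~\ref{th:c-7} cannot cover the leftover band $\frac{4}{5}<\alpha<1$, $k+2\le n\le 2k+1$, because there the auxiliary function $g(m)$ turns convex, so neither the second-derivative/endpoint argument nor the separation at $x=n$ remains valid. Replacing the optimization of $g$ by the exact comparison $f_m(\mu_1)<0$ eliminates any dependence on the sign of $g''$, which is exactly what lets a single argument handle all $0<\alpha<1$. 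The only genuinely delicate point to double-check is the bookkeeping in $c_m-c_1$, since the clean cancellation to $\alpha(n-1)$ is what the whole argument rests on.
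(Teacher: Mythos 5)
There is no proof in the paper to compare yours against: the statement is left there as a \emph{conjecture}, supported only by numerical examples, and the paper's actual results cover only the ranges $0<\alpha\le\frac{4}{5}$ (Theorem~\ref{th:c-6}) and $n\ge 2k+2$ (Theorem~\ref{th:c-7}), leaving the band $\frac{4}{5}<\alpha<1$, $k+2\le n\le 2k+1$ open. I checked your argument and it is correct; it settles the conjecture using only facts the paper proves (Remark~\ref{re:1}, Theorem~\ref{th:c-5}, Theorem~\ref{th:2}/Corollary~\ref{co:1}). The bookkeeping you flagged as the delicate point does work out: writing $f_m(x)=x^2-(\alpha n+\alpha m+n-2)x+c_m$ for the quadratic of Theorem~\ref{th:c-5} and $\mu_m=\mu_\alpha(K(n,k,m))$, one has
\begin{equation*}
c_m-c_1=(m-1)\bigl[(2\alpha-1)n+(1-\alpha)k-\alpha+(1-\alpha)(m+1)\bigr],
\end{equation*}
so that for $2\le m\le n-k-1$ and $T(m):=\frac{c_m-c_1}{\alpha(m-1)}$,
\begin{equation*}
f_m(\mu_1)=-\alpha(m-1)\mu_1+(c_m-c_1)=\alpha(m-1)\bigl(T(m)-\mu_1\bigr),
\qquad
n-1-T(m)=\frac{1-\alpha}{\alpha}\,(n-k-1-m)\ \ge\ 0 .
\end{equation*}
Since $k\le n-2$, $K(n,k,1)$ is a proper spanning subdigraph of $\overset{\longleftrightarrow}{K_{n}}$, so Theorem~\ref{th:2} gives $\mu_1>n-1\ge T(m)$, hence $f_m(\mu_1)<0$; the two roots of $f_m$ are real and distinct (the off-diagonal entries of the $2\times2$ quotient matrix are positive for $\alpha<1$), so $\mu_1$ lies strictly between them and $\mu_1<\mu_m$. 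Combined with the reduction and equality clause of Remark~\ref{re:1}, this gives $\mu_\alpha(G)\ge\mu_\alpha(K(n,k,1))$ with equality iff $G\cong K(n,k,1)$, for every $0<\alpha<1$. Your diagnosis of why this succeeds where the paper stalls is also accurate: the paper minimizes the closed-form radical $g(m)$, which requires $g''<0$ (failing for $\alpha>\frac45$ when $n<2k+2$), or else separates $\mu_\alpha(K(n,k,1))$ from the rest at the test point $x=n$, which forces $n\ge 2k+2$; your exact root-transfer comparison $f_m(\mu_1)<0$ needs neither, because the worst-case threshold telescopes to the universal floor $n-1$ of Theorem~\ref{th:2}. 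Two cosmetic points: when $n=k+2$ the index set $\{2,\dots,n-k-1\}$ is empty and the claim is immediate from Remark~\ref{re:1}, which is worth stating; and your conclusion also recovers, as a consistency check, the known $\alpha=0$ degeneracy, since at $\alpha=0$ the same identity gives $f_m(\mu_1)=c_m-c_1\le 0$ with equality exactly at $m=n-k-1$, matching part $(i)$ of Theorems~\ref{th:c-6} and~\ref{th:c-7}. This should be written up as a theorem replacing the conjecture.
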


\section{The minimum $D_\alpha$ spectral radius of strongly connected digraphs with given arc connectivity}

Let $\mathcal{G}^*_{n,k}$ denote the set of strongly connected digraphs
with order $n$ and arc connectivity $\kappa'(G)=k\geq1$. If $k=n-1$,
then $\mathcal{G}^*_{n,k}=\{\overset{\longleftrightarrow}{K_{n}}\}$.
So we only consider the cases $1\leq k \leq n-2$.

In \cite{LiS}, Lin and Shu proved that $K(n,k,n-k-1)$ or $K(n,k,1)$ attains the maximum $D_0$ spectral radius among all strongly connected
digraphs with given arc connectivity. We generalize their results to $0\leq\alpha<1$.

\noindent\begin{lemma}\label{le:12} (\cite{XiWa3}) Let $G$ be a strongly connected digraph
with order $n$ and arc connectivity $k\geq1$, and $S$ be an arc cut set of $G$ of size $k$
such that $G-S$ has exactly two strongly connected components, say $G_1$ and $G_2$ with $|V(G_1)|=n_1$ and $|V(G_2)|=n_2$, where $n_1+n_2=n$.
If $d_v^+>k$ and $d_v^->k$ for each vertex $v\in V(G)$, then $n_1\geq k+2$, $n_2\geq k+2$.
\end{lemma}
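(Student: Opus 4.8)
The plan is to first pin down the structure of the arc cut $S$ and then run two separate degree counts, one for each side. Since $G$ is strongly connected and $G-S$ splits into exactly the two strong components $G_1$ and $G_2$ with $V(G_1)\cup V(G_2)=V(G)$ (no vertices are deleted, as $n_1+n_2=n$), the condensation of $G-S$ is an acyclic digraph on two nodes. Hence, after relabelling if necessary, every cross arc surviving in $G-S$ runs from $G_1$ to $G_2$, and $S$ must contain \emph{every} arc of $G$ directed from $G_2$ to $G_1$, since otherwise one could still travel from $G_2$ back to $G_1$ in $G-S$. As the set $T$ of all $G_2\to G_1$ arcs is itself an arc cut, minimality of $S$ (size $k=\kappa'(G)$) forces $S=T$ and $|T|=k$; in particular $G$ contains exactly $k$ arcs from $G_2$ to $G_1$, while the number of $G_1\to G_2$ arcs is only known to be at least $1$.

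With this in hand I would bound $n_2$ via out-degrees, precisely because the crossing direction that is capped at $k$ is the one \emph{leaving} $G_2$. For any $v\in V(G_2)$ the out-neighbours lie in $G_2$ or in $G_1$, and the $G_2\to G_1$ arcs total exactly $k$, so summing over $V(G_2)$ gives $\sum_{v\in V(G_2)} d_v^+\le n_2(n_2-1)+k$, whereas the hypothesis $d_v^+\ge k+1$ gives $\sum_{v\in V(G_2)} d_v^+\ge n_2(k+1)$. Combining these yields $k(n_2-1)\le n_2(n_2-2)=(n_2-1)^2-1$; after noting $n_2\ge 2$ (a one-vertex $G_2$ would have all out-arcs in $S$, forcing $d_v^+\le k$, a contradiction) one divides by $n_2-1>0$ to get $k<n_2-1$, hence $n_2\ge k+2$.

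The bound $n_1\ge k+2$ comes from the mirror-image argument using in-degrees, since for $G_1$ it is the \emph{entering} crossing arcs that are capped at $k$. For $v\in V(G_1)$ the in-neighbours lie in $G_1$ or in $G_2$, and again only $k$ arcs run from $G_2$ to $G_1$, so $\sum_{v\in V(G_1)} d_v^-\le n_1(n_1-1)+k$ while $d_v^-\ge k+1$ gives $\sum_{v\in V(G_1)} d_v^-\ge n_1(k+1)$. This produces the identical inequality $k(n_1-1)\le (n_1-1)^2-1$ and, with $n_1\ge 2$ handled as before, the conclusion $n_1\ge k+2$.

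The step requiring the most care is the structural one: confirming that a minimum arc cut realizing a two-component split must consist precisely of the arcs in a single crossing direction, and that the opposite direction retains at least one arc. Once that is settled the two counts are routine; the only genuine pitfalls are the degenerate single-vertex components and the need to pair $G_2$ with out-degrees and $G_1$ with in-degrees, so that in each case the bounded crossing direction (the $k$ cut arcs) is the one entering the estimate.
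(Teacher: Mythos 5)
The paper itself never proves this lemma: it is quoted from reference \cite{XiWa3} (the authors' companion paper) and used as a black box, so there is no in-paper proof to compare yours against; I can only judge your argument on its own merits, and it is correct and complete. Your structural step is sound: since the two strong components of $G-S$ cover all of $V(G)$ and the condensation of $G-S$ is acyclic, surviving cross arcs run in one direction only (say $G_1\to G_2$), so $S$ contains the set $T$ of all $G_2\to G_1$ arcs of $G$; as $T$ is itself an arc cut, $|T|\geq\kappa'(G)=k=|S|$ forces $S=T$, i.e.\ exactly $k$ arcs cross from $G_2$ to $G_1$. You then pair the bounded crossing direction with the right degree count in each component (out-degrees for $G_2$, in-degrees for $G_1$), and the resulting inequality $n_i(k+1)\le n_i(n_i-1)+k$, equivalently $k(n_i-1)\le (n_i-1)^2-1$, together with your correct exclusion of the singleton cases $n_i=1$ (a lone vertex would have $d_v^+\le k$ or $d_v^-\le k$), yields $n_i\ge k+2$ for $i=1,2$. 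This double-counting argument is the natural proof of such a statement and is presumably essentially what appears in \cite{XiWa3}.
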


\noindent\begin{lemma}\label{le:13} Let $G\in\mathcal{G}^*_{n,k}$, which contains a vertex of outdegree $k$. Then
$$\mu_\alpha(G)\geq \mu_\alpha(K(n,k,n-k-1)).$$
\end{lemma}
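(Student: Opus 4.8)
The plan is to exhibit $K(n,k,n-k-1)$ as a spanning super-digraph of $G$ and then appeal to the monotonicity of $\mu_\alpha$ under arc addition established in Corollary \ref{co:1}. First I would fix a vertex $v$ with $d_v^+=k$, set $S=N^+_v(G)$ (so $|S|=k$), and let $R=V(G)\setminus(\{v\}\cup S)$; since $G$ has no loops, $|R|=n-1-k=n-k-1$. This yields the partition $V(G)=\{v\}\cup S\cup R$ that I intend to match, respectively, to the singleton $\overset{\longleftrightarrow}{K_1}$, the cut $\overset{\longleftrightarrow}{K_k}$, and the large clique $\overset{\longleftrightarrow}{K_{n-k-1}}$ in the description of $K(n,k,n-k-1)$.

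Next I would record the explicit arc structure of $H:=K(n,k,n-k-1)$ under this labeling. Reading off the definition with $m=n-k-1$ (so that $\overset{\longleftrightarrow}{K}_{n-k-m}=\overset{\longleftrightarrow}{K_1}$ is the single vertex and $E$ runs from the large clique into it), one sees that $H$ contains every possible arc among the vertices of $\{v\}\cup S\cup R$ except the $n-k-1$ arcs $\{(v,u):u\in R\}$; equivalently $\overset{\longleftrightarrow}{K_n}=H+\{(v,u):u\in R\}$, and in $H$ the vertex $v$ has out-neighbourhood exactly $S$, so $d_v^+(H)=k$, confirming that $H$ is a legitimate member of the family under comparison.

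The key step is then to verify the containment $E(G)\subseteq E(H)$. Every arc of $G$ is of one of the types ``within $S$'', ``within $R$'', ``between $S$ and $R$'', ``between $S$ and $v$'', or ``from $R$ to $v$'', all of which are present in $H$; the only arcs missing from $H$ are those from $v$ into $R$, and $G$ has none of these since $N^+_v(G)=S$ is disjoint from $R$. Hence $G$ is a spanning subdigraph of $H$, so $H$ is obtained from $G$ by successively adding the arcs of $E(H)\setminus E(G)$. Each intermediate digraph contains the strongly connected $G$ and is therefore strongly connected, so Corollary \ref{co:1} applies at every step and gives $\mu_\alpha(G)\geq\mu_\alpha(H)=\mu_\alpha(K(n,k,n-k-1))$. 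Alternatively, since passing from $G$ to $H$ only shortens distances, one has $D(H)\leq D(G)$ and $Diag(Tr_H)\leq Diag(Tr_G)$ entrywise, whence $0\leq D_\alpha(H)\leq D_\alpha(G)$ and Lemma \ref{le:2} delivers the conclusion at once.

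There is no genuinely hard analytic step here; the care goes into the bookkeeping. The main point to get right is that the complement of $H$ in the complete digraph is exactly the arc set $\{(v,u):u\in R\}$ and that $|R|=n-k-1$, so the labeling is forced and legitimate; beyond that one only needs the immediate observation that strong connectivity is preserved along the arc-addition chain, so that Corollary \ref{co:1} is applicable at each step. Note that the hypothesis $G\in\mathcal{G}^*_{n,k}$ is used only to pin down the value $k$ (the existence of a vertex of outdegree $k$ already forces $\kappa'(G)\leq k$); the inequality itself requires nothing more than the embedding $G\subseteq H$.
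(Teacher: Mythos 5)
Your proposal is correct and takes essentially the same route as the paper's own proof: the paper likewise fixes a vertex $w$ with $d_w^+=k$, adds all possible arcs except those leaving $w$ to obtain a digraph isomorphic to $K(n,k,n-k-1)$, and concludes by Corollary \ref{co:1}. Your write-up merely makes the bookkeeping explicit (the labeling $\{v\}\cup S\cup R$ and the containment $E(G)\subseteq E(H)$) and adds the equivalent monotonicity argument via Lemma \ref{le:2}.
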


\begin{proof} Let $w$ be a vertex of $G$ such that $d_w^+=k$. Then the arcs out-incident to $w$ form an arc cut set of size $k$. Adding all possible arcs from $G\setminus\{w\}$ to $G\setminus\{w\}\cup\{w\}$, we obtain a digraph $H$, which is isomorphic to $K(n,k,n-k-1)$, the arc connectivity of $H$ remains equal to $k$. If
$G\neq K(n,k,n-k-1)$, then $\mu_\alpha(G)>\mu_\alpha(K(n,k,n-k-1))$ by Corollary \ref{co:1}. So the result follows.
\end{proof}

\noindent\begin{lemma}\label{le:14} Let $G\in\mathcal{G}^*_{n,k}$, which contains a vertex of indegree $k$. Then
$$\mu_\alpha(G)\geq \mu_\alpha(K(n,k,1)).$$
\end{lemma}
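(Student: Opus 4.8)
The plan is to mirror the proof of Lemma \ref{le:13}, but with the roles of outdegree and indegree (equivalently, heads and tails) reversed, so that the extremal configuration becomes $K(n,k,1)$ rather than $K(n,k,n-k-1)$. First I would let $w$ be a vertex of $G$ with indegree $d_w^-=k$. Since $G$ is strongly connected with arc connectivity $k$, the arcs in-incident to $w$ form an arc cut set of size exactly $k$: deleting them isolates $w$ from receiving any signal, destroying strong connectivity. This is the dual observation to the one used for the outdegree case.

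Next I would add all possible arcs to $G$ so as to saturate everything consistent with keeping $\{w\}$ on the correct side of this cut. Concretely, I would complete $G\setminus\{w\}$ to a complete digraph $\overset{\longleftrightarrow}{K_{n-1}}$ and add both arcs $(w,v)$ for every $v$ together with all arcs $(v,w)$ except that we retain exactly the $k$ in-arcs to $w$ as the cut; the resulting digraph $H$ has a single vertex $w$ playing the role of the size-$1$ side that receives arcs only from the $k$-cut, which is precisely the structure of $K(n,k,1)$ (the block $\overset{\longleftrightarrow}{K_m}$ with $m=1$). I would then check that the arc connectivity of $H$ is still $k$, so that $H\in\mathcal{G}^*_{n,k}$.

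The final step is monotonicity: by Corollary \ref{co:1}, adding arcs strictly decreases the $D_\alpha$ spectral radius, so $\mu_\alpha(G)\geq\mu_\alpha(H)=\mu_\alpha(K(n,k,1))$, with strict inequality unless $G$ already equals $H$. This gives the claimed bound.

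The main obstacle I anticipate is verifying that the completed digraph $H$ is genuinely isomorphic to $K(n,k,1)$ and that its arc connectivity equals $k$ rather than dropping below or rising above $k$. One must be careful that the $k$ retained in-arcs to $w$ really do constitute a \emph{minimum} arc cut after the completion, i.e.\ that no smaller cut has been created; since $w$ has indegree $k$ in $H$ this is immediate from the definition of arc connectivity, but one should confirm that every other vertex has both in- and out-degree at least $k$ so that the cut isolating $w$ is the cheapest one. Matching the labeling of $H$ to the definition of $K(n,k,1)$ given before Theorem \ref{th:c-5} — identifying the $k$-cut with $\overset{\longleftrightarrow}{K_k}$, the singleton $\{w\}$ with $\overset{\longleftrightarrow}{K_m}$ for $m=1$, and the remaining $n-k-1$ vertices with $\overset{\longleftrightarrow}{K_{n-k-1}}$, together with the edge set $E$ — is the one routine but slightly fiddly bookkeeping step.
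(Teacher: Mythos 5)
Your proof is correct and follows essentially the same route as the paper's: take the vertex $w$ with $d_w^-=k$, observe its in-arcs form a $k$-arc cut, add all arcs from $w$ to $G\setminus\{w\}$ and all arcs within $G\setminus\{w\}$ to obtain a digraph isomorphic to $K(n,k,1)$, and conclude by the monotonicity in Corollary \ref{co:1}. The only caution is your slightly garbled description of which arcs into $w$ are added (none should be, beyond the $k$ retained ones), but the rest of your argument makes the intended construction unambiguous and it matches the paper's.
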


\begin{proof} Let $w$ be a vertex of $G$ such that $d_w^-=k$. Then the arcs in-incident to $w$ form an arc cut set of size $k$. Adding all possible arcs from $w$ to $G\setminus\{w\}$, and all possible arcs from $G\setminus\{w\}$ to $G\setminus\{w\}$, we obtain a digraph $H'$, which is isomorphic to $K(n,k,1)$, the arc connectivity of $H'$ remains equal to $k$. If
$G\neq K(n,k,1)$, then $\mu_\alpha(G)>\lambda_\alpha(K(n,k,1))$ by Corollary \ref{co:1}. So the result follows.
\end{proof}

\noindent\begin{theorem}\label{th:c-9}
Let $G\in\mathcal{G}^*_{n,k}$. Then we have

$(i)$ For $\alpha=0$, $\mu_\alpha(G)\geq\mu_\alpha(K(n,k,1))=\mu_\alpha(K(n,k,n-k-1))$ with equality if and only if
$G\cong K(n,k,n-k-1)$ or $G\cong K(n,k,1)$.

$(ii)$ For $0<\alpha<1$, $\mu_\alpha(G)\geq\mu_\alpha(K(n,k,1))$, with equality if and only if $G\cong K(n,k,1)$.
\end{theorem}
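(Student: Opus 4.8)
The plan is to split the argument according to whether the arc connectivity $k=\kappa'(G)$ is forced to appear as a degree. Since for a strongly connected digraph one has $\kappa'(G)\le\delta^+(G)$ and $\kappa'(G)\le\delta^-(G)$ (the arcs leaving, respectively entering, a vertex of minimum out-, respectively in-degree form an arc cut set), every vertex of $G$ satisfies $d_v^+\ge k$ and $d_v^-\ge k$. I would therefore distinguish three cases: (1) some vertex has $d_v^+=k$; (2) some vertex has $d_v^-=k$; (3) $d_v^+>k$ and $d_v^->k$ for every vertex $v$.

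In Case (1), Lemma~\ref{le:13} gives $\mu_\alpha(G)\ge\mu_\alpha(K(n,k,n-k-1))$, and in Case (2), Lemma~\ref{le:14} gives $\mu_\alpha(G)\ge\mu_\alpha(K(n,k,1))$. To merge these with the claimed bound I would reuse the endpoint comparison already carried out inside the proofs of Theorems~\ref{th:c-6} and~\ref{th:c-7}: the computation of $g(n-k-1)-g(1)$ there shows $\mu_\alpha(K(n,k,n-k-1))=\mu_\alpha(K(n,k,1))$ when $\alpha=0$ and $\mu_\alpha(K(n,k,n-k-1))>\mu_\alpha(K(n,k,1))$ when $0<\alpha<1$. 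Thus in both cases $\mu_\alpha(G)\ge\mu_\alpha(K(n,k,1))$, the only possibilities for equality being $G\cong K(n,k,n-k-1)$ (which can occur solely at $\alpha=0$) or $G\cong K(n,k,1)$.

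Case (3) is where the work lies. Here I would fix a minimum arc cut $S$ with $|S|=k$ for which $G-S$ has exactly two strongly connected components $G_1,G_2$, with $|V(G_1)|=n_1$, $|V(G_2)|=n_2$ and $n_1+n_2=n-k$; since $d_v^+>k$ and $d_v^->k$ for all $v$, Lemma~\ref{le:12} yields $n_1\ge k+2$ and $n_2\ge k+2$. Orienting $G_1,G_2$ by the condensation order, I would add all missing arcs inside $G_1$, inside $G_2$, between $S$ and each component in both directions, and all arcs from the source component to the sink component, leaving no direct arc from sink to source. By Corollary~\ref{co:1} this only decreases $\mu_\alpha$, so $\mu_\alpha(G)\ge\mu_\alpha(K(n,k,m))$ for $m\in\{n_1,n_2\}$; because $n_1,n_2\ge k+2$, this $m$ satisfies $k+2\le m\le n-2k-2$, so $1<m<n-k-1$ is strictly interior, and in particular $n=n_1+n_2+k\ge 3k+4\ge 2k+2$.

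It then remains to show $\mu_\alpha(K(n,k,m))>\mu_\alpha(K(n,k,1))$ for every interior $m$, which I would read off from Section~4. For $0\le\alpha\le\frac45$ the function $g$ of Theorem~\ref{th:c-6} satisfies $g''<0$, so $g$ is strictly concave on $[1,n-k-1]$ and hence $g(m)>\min\{g(1),g(n-k-1)\}=g(1)$ at interior $m$ (using $g(1)\le g(n-k-1)$ from the endpoint comparison); since $\mu_\alpha(K(n,k,\cdot))=\tfrac12 g(\cdot)$, this gives the strict inequality. For $\frac45<\alpha<1$, the estimate in the proof of Theorem~\ref{th:c-7}, valid because $n\ge 2k+2$, shows $\mu_\alpha(K(n,k,m))>n>\mu_\alpha(K(n,k,1))$ for all $2\le m\le n-k-1$, which again covers our interior $m$. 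Hence Case (3) contributes only strict inequality, no new equality case arises, and combining the three cases yields $\mu_\alpha(G)\ge\mu_\alpha(K(n,k,1))$ with the stated equality characterization. I expect the main obstacle to be Case (3): both verifying that a size-$k$ minimum arc cut can be chosen to give exactly two strong components so that Lemma~\ref{le:12} applies, and stitching together the two $\alpha$-regimes, since the clean concavity argument only survives up to $\alpha=\frac45$ and the large-$\alpha$ range must be handled by the threshold estimate $\mu_\alpha(K(n,k,m))>n$.
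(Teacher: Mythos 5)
Your Cases (1) and (2), and the way you merge them through the endpoint comparison $g(n-k-1)$ versus $g(1)$, coincide with the paper's Case 1 and are fine. The genuine gap is in your Case (3), in the reduction step. There you treat the minimum arc cut $S$ as if it were a set of $k$ \emph{vertices}: the instruction ``add all missing arcs between $S$ and each component in both directions'' has no meaning when $S$ is a set of $k$ arcs, and, more seriously, the target digraph you describe --- ``all arcs from the source component to the sink component, leaving no direct arc from sink to source'' --- cannot be reached from $G$ by adding arcs at all. With the usual normalization (no arcs from $G_1$ to $G_2$ in $G-S$), the $k$ arcs of $S$ are precisely arcs from the sink component $G_1$ to the source component $G_2$, and they are present in $G$; any digraph to which Corollary~\ref{co:1} is applied must be a superdigraph of $G$ and hence must retain them. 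So the claimed inequality $\mu_\alpha(G)\geq\mu_\alpha(K(n,k,m))$ for $m\in\{n_1,n_2\}$ is not established as written; indeed for $m=n_1$ it is not available in general, since arcs from $G_2$ to $G_1$ are unrestricted in $G$. (A smaller slip of the same origin: deleting arcs removes no vertices, so $n_1+n_2=n$, not $n-k$; this only affects your arithmetic, since either way $n\geq 2k+4\geq 2k+2$.)

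The gap is repairable, in two ways. The paper's way (its Case 2) is to absorb the \emph{endpoints} of $S$ into vertex sets: it completes each side, takes a $k$-set $U\subseteq V(G_1)$ containing the tails of $S$ and a $k$-set $W\subseteq V(G_2)$ containing the heads, forms $H_0$ from $\overset{\longleftrightarrow}{K_{n_1}}\cup\overset{\longleftrightarrow}{K_{n_2}}$ by adding all arcs from $U$ to $W$ and all arcs from the $G_2$-side to the $G_1$-side, observes that the completion of $G$ is a spanning subdigraph of $H_0$ and that $\kappa(H_0)=k$ with $H_0\ncong K(n,k,1),K(n,k,n-k-1)$, and then invokes Theorem~\ref{th:c-7} (legitimate because $n\geq 2k+4\geq 2k+2$). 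Alternatively, your embedding can be fixed directly: choose $A\subseteq V(G_1)$ with $|A|=k$ containing all tails of $S$ (possible since $S$ has at most $k$ tails and $n_1\geq k+2$); then $G$ has no arcs from $V(G_1)\setminus A$ to $V(G_2)$, so $G$ is a spanning subdigraph of $K(n,k,n_2)$ with join part $A$, middle part $V(G_1)\setminus A$ and $m$-part $V(G_2)$, whence $\mu_\alpha(G)\geq\mu_\alpha(K(n,k,n_2))$ by Corollary~\ref{co:1}; since $k+2\leq n_2\leq n-k-2$ is interior, your two-regime comparison (strict concavity of $g$ for $0\leq\alpha\leq\frac45$ from Theorem~\ref{th:c-6}, and the threshold $\mu_\alpha(K(n,k,m))>n>\mu_\alpha(K(n,k,1))$ for $\frac45<\alpha<1$ from Theorem~\ref{th:c-7}) then closes the argument with strict inequality, exactly as you intended. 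With that repair your Case (3) is a legitimate alternative to the paper's detour through $H_0$, and arguably cleaner since it compares members of $\mathcal{K}(n,k)$ directly; note also that both you and the paper rely on a minimum arc cut leaving exactly two strong components, a point the paper asserts without proof and which you rightly flag as needing verification.
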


\begin{proof} Let $G$ be a digraph in $\mathcal{G}^*_{n,k}$. Note that each vertex in the digraph $G$ has outdegree at least
$k$ and indegree at least $k$, otherwise $G\notin\mathcal{G}^*_{n,k}$. Then, we consider the following two cases.

{\bf Case 1.} If there exists a vertex $u$ of $G$ with outdegree $k$, by Lemma \ref{le:13},
$\mu_\alpha(G)\geq\mu_\alpha(K(n,k,n-k-1))$. If there exists a vertex $u$ of $G$ with indegree $k$, by Lemma \ref{le:14},
$\mu_\alpha(G)\geq\mu_\alpha(K(n,k,1))$. However, by the proof of Theorem \ref{th:c-6}, we have
$\mu_\alpha(K(n,k,1))=\mu_\alpha(K(n,k,n-k-1))$ for $\alpha=0$, and $\mu_\alpha(K(n,k,n-k-1))>\mu_\alpha(K(n,k,1))$ for $0<\alpha<1$.
The result follows in this case.

{\bf Case 2.} We suppose that all
vertices of $G$ have outdegree greater than $k$ and indegree greater than $k$. Let $S$ be an
arc cut set of $G$ containing $k$ arcs, then $G-S$ consists of exactly two strongly
connected components $G_1$, $G_2$, with order $n_1$, $n_2$, respectively. Without loss of generality,
we may assume that there are no arcs from $G_1$ to $G_2$ in $G-S$.
By Lemma \ref{le:12}, $n_1\geq k+2$, $n_2=n-n_1\geq k+2$, then $k+2\leq n_1\leq n-k-2$, $n\geq n_1+k+2\geq2k+4$.
Next we construct a new digraph $G'$ by
adding to $G$ any possible arcs from $G_2$
to $G_1\cup G_2$ or any possible arcs from $G_1$ to $G_1$ that were not present in $G$. Obviously,
the arc connectivity of $G'$ remains equal to $k$ and all
vertices of $G'$ have outdegree greater than $k$ and indegree still greater than $k$. By  Corollary \ref{co:1},
the addition of any such arc will give $\mu_\alpha(G)>\mu_\alpha(G')$.
Let $G''=\overset{\longleftrightarrow}{K_{n_1}}\cup\overset{\longleftrightarrow}{K_{n_2}}$, $U=\{u_1, u_2, \cdots, u_k\}$ be a set of $k$ vertices in
$V(\overset{\longleftrightarrow}{K_{n_1}})$ and $W=\{v_1, v_2, \cdots, v_k\}$ be a set of $k$ vertices in $V(\overset{\longleftrightarrow}{K_{n_2}})$. Let
$H_0$ be a digraph obtained from $G''$ by adding all possible arcs from $U$ to $W$, and adding all possible arcs from $\overset{\longleftrightarrow}{K_{n_2}}$ to $\overset{\longleftrightarrow}{K_{n_1}}$. Note that $G'$ is a spanning strongly connected subdigraph of $H_0$, therefore,
by Corollary \ref{co:1}, $\mu_\alpha(G')\geq\mu_\alpha(H_0)$.
However, we can know that $n\geq2k+4$, the vertex connectivity of $H_0$ is $k$, $H_0\ncong K(n,k,1)$ and $H_0\ncong K(n,k,n-k-1)$. Hence, by Theorem \ref{th:c-7},
we know that
$\mu_\alpha(H_0)>\mu_\alpha(K(n,k,1))$. Therefore, $\mu_\alpha(G)\geq\mu_\alpha(G')\geq\mu_\alpha(H_0)>\mu_\alpha(K(n,k,1))$.

Therefore, combining the above two cases, we get the desired result.
\end{proof}

\end{document}